\numberwithin{equation}{section}
\numberwithin{figure}{section}
\theoremstyle{plain}
\newtheorem{thm}{\protect\theoremname}
\theoremstyle{plain}
\newtheorem{lem}[thm]{\protect\lemmaname}
\theoremstyle{plain}
\newtheorem{prop}[thm]{\protect\propositionname}
\theoremstyle{remark}
\newtheorem{rem}[thm]{\protect\remarkname}
\theoremstyle{plain}
\newtheorem*{thm*}{\protect\theoremname}
\providecommand{\lemmaname}{Lemma}
\providecommand{\propositionname}{Proposition}
\providecommand{\remarkname}{Remark}
\providecommand{\theoremname}{Theorem}
\begin{document}
\title[instantaneous control of a melting problem]{ An instantaneous semi-Lagrangian approach for boundary control of
a melting problem}
\author{Youness Mezzan \& moulay hicham tber}
\address{Cadi Ayyad University, Department of Mathematics,
Av. Abdelkarim Elkhattabi, Marrakech, Morocco.}
\begin{abstract}
In this paper, a sub-optimal boundary control strategy for a free
boundary problem is investigated. The model is described by a non-smooth
convection-diffusion equation. The control problem is addressed by
an instantaneous strategy based on the characteristics method. The
resulting time independent control problems are formulated as function
space optimization problems with complementarity constraints. At each
time step, the existence of an optimal solution is proved and first-order
optimality conditions with regular Lagrange multipliers are derived
for a penalized-regularized version. The performance of the overall
approach is illustrated by numerical examples.\\
 
\end{abstract}

\keywords{ Free boundary problems, Sub-optimal boundary control, Characteristics
method, Complementarity constraints, Penalization-regularization }
\subjclass[2000]{35R35, 65M25, 49K20, 90C33}
\maketitle

\section{Introduction}

Heat transfer processes involving phase change are relevant to many
engineering disciplines including casting of metals, thermal storage,
power systems, micro-electronics, etc \cite{Dhir}. Enhancing the
thermal performance of systems using such processes requires a proper
control of the temperature profile and the associated phase change
interface.\\
 Our motivation in this paper is to design an optimization strategy
for a melting process that might be affected by a convection in the
liquid phase. We focus on two-phase materials with sharp interface
and we adopt a single domain approach where the Stefan condition is
automatically satisfied across the free boundary. More precisely we
consider a source-based method in which the total enthalpy is split
into a specific heat and a latent heat acting as a source term in
the energy equation \cite{Saadi}. Our goal is to control the temperature
profile using the heat flux on a part of the boundary. This task is,
nevertheless, quite challenging even for simple geometries. In fact,
the liquid-solid free boundary changes sharply with respect to the
temperature. Furthermore, from a numerical point of view, solutions
may exhibits non-physical oscillations for convection dominated flows.
Finally, the related optimal control problem is very demanding in
terms of computational time and storage.

Optimal control problems in the context of Stefan-like models have
attracted a lot of attention since the eighties of the last century.
We refer, in particular, to the monograph \cite{Goldman} and the
references there. However, most used models were generally based on
simplified assumptions on the free boundary, and therefore describe
roughly the phase-change process. Subsequent studies \cite{Hinze-Ziegenberg-1,Hinze-Ziegenberg-2,Bernauer,Baran}
have considered two-phase Stefan problems with a focus on numerical
aspects. Recently, some existence and differentiability results are
established in \cite{Abdulla-1,Abdulla-2,Abdulla-3} for one-dimensional
problems. \\
 To accommodate the problem, our strategy here exploits a semi-Lagrangian
scheme \cite{Pironneau-Ziebenbalg} in the context of an instantaneous
control approach \cite{Choi-et-al,Choi-Hinze-Kunisch}. The time derivative
and the convection terms are combined as a directional derivative
along the characteristics. We show that the time-discrete state equation
satisfies a maximum principle. Then, at each time step we cast the
time-discrete optimal control problem - which only depends on the
state at the previous time - as an optimization problem with a complemantarity
constraint between the temperature and solid fraction. However, due
to the structure of the feasible set, standard numerical algorithms
can't be applied directly to solve such optimization problems (see
for instance \cite{Hintermuller et al.}). Here, we propose a regularization-penalization
technique where we first regularize the constraint on the temperature
variable then we incorporate the related complementarity into the
objective functional via an $\ell_{1}-$penalty approach \cite{Loebhard}.
For the resulting regularized-penalized problems we show an existence
and consistency result and further we derive first-order necessary
optimality conditions that enjoy regular Lagrange multipliers. The
over all approach leads, naturally, to sub-optimal solutions. Nevertheless,
a good performance is achieved in the numerical experiments.

\section{State equation}

\subsection*{Mathematical model}

We consider the melting of a finite slab of a pure substance. The
model is described by the non-dimensional source-based Stefan equation
\[
\dfrac{\partial y}{\partial t}+\overrightarrow{v}\cdot\nabla y-\nabla\cdot\left(k\,\nabla y\right)=\dfrac{\partial}{\partial t}\xi+\overrightarrow{v}\cdot\nabla\xi\qquad\text{in }\Omega\times\left(t_{0},\,t_{f}\right),
\]
where $\kappa=\kappa(x,\,t)$ is the thermal conductivity and $\overrightarrow{v}=\overrightarrow{v}(x,\,t)$
is a convection velocity. The solid fraction $\xi=\xi\left(x,\,t\right)$
and temperature distribution $y=y(x,\,t),$ are related through the
relation 
\[
\xi\in\,\mathcal{H}(y):=\left\{ \begin{array}{ll}
0 & \quad\textrm{ if }y>0,\\
\left[0,\,1\right] & \quad\textrm{ if }y=0,\\
1 & \quad\textrm{ if }y<0.
\end{array}\right.
\]

Here the phase-change processes is assumed to be isothermal. The model
domain $\Omega$ is an open bounded of $\mathbb{R}^{n}$ $(n=1,\,2)$
with a smooth boundary $\Gamma$ corresponding to both solid and liquid
regions (see Fig.~\ref{fig-Pb-Config}). On $\Gamma$ we distinguish
three parts: the system is insulated on $\Gamma_{N},$ a fixed temperature
$y_{D}=0$ is maintained on $\Gamma_{D}$ and a non-negative heat
flux control $u=u(x,\,t)$ is applied on $\Gamma_{C}.$ The substance
is initially at the melting/freezing point 
\[
y(x,\,t_{0})=0,\qquad\qquad\xi(x,\,t_{0})=\xi_{0}(x)\in\left[0,\,1\right]\qquad\qquad\text{ for }x\in\Omega.
\]
The complete model equation reads 
\[
\tag{\ensuremath{\mathcal{M}^{t}}}\left\{ \begin{array}{ll}
\dfrac{\partial y}{\partial t}+\overrightarrow{v}\cdot\nabla y-\nabla\cdot\left(\kappa\,\nabla y\right)=\dfrac{\partial}{\partial t}\xi+\overrightarrow{v}\cdot\nabla\xi & \text{in }\Omega\times\left(t_{0},\,t_{f}\right),\\
\xi\in\,\mathcal{H}(y) & \text{ in }\Omega\times\left(t_{0},\,t_{f}\right),\\
\dfrac{\partial y}{\partial n}=0 & \text{in }\Gamma_{N}\times\left(t_{0},\,t_{f}\right),\\
\dfrac{\partial y}{\partial n}=u & \text{in }\Gamma_{C}\times\left(t_{0},\,t_{f}\right),\\
y=0 & \text{ in }\Gamma_{D}\times\left(t_{0},\,t_{f}\right),\\
y\left(t_{0}\right)=0,\,\xi\left(t_{0}\right)=\xi_{0} & \text{ in }\Omega.
\end{array}\right.
\]
\begin{figure}
\centering{}\includegraphics[width=8cm,height=5cm]{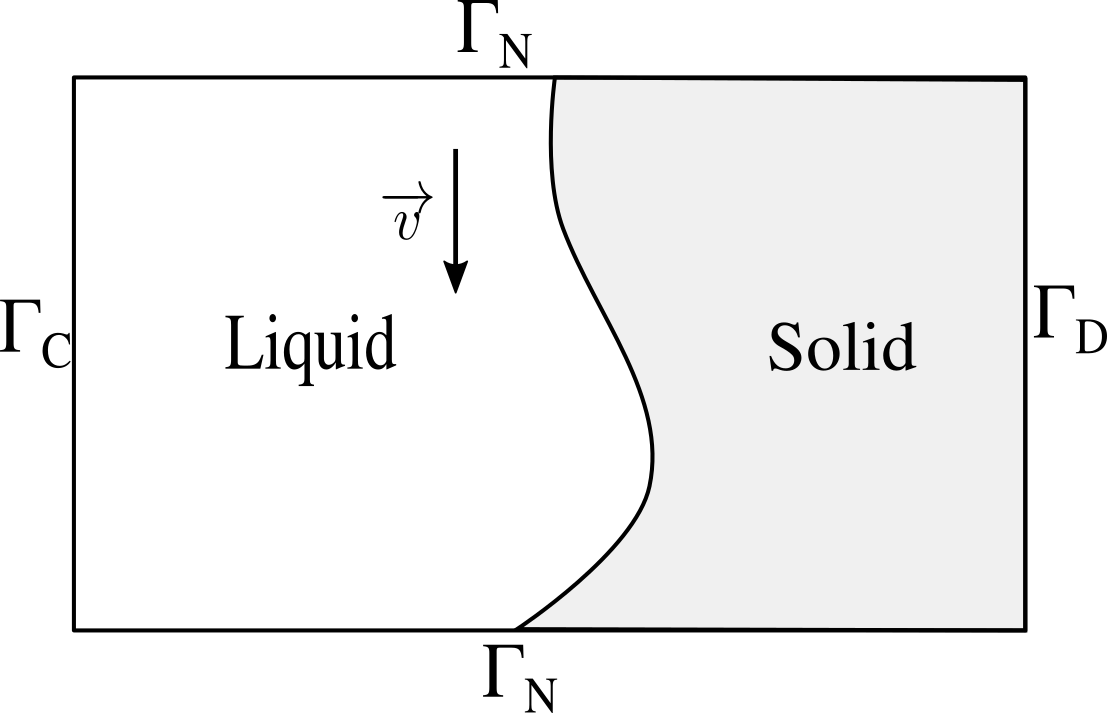}\caption{\label{fig-Pb-Config}Problem configuration}
\end{figure}

\subsection*{Time discretization}

Due to the hyperbolic character of the state equation, the numerical
solutions may exhibit undesired oscillations for dominated convection
terms. One approach to deal with this issue consists in writing $\dfrac{\partial\phi}{\partial t}+\overrightarrow{v}\cdot\nabla\phi$
as $\dfrac{D\phi}{Dt}$ the material derivative of a given function
$\phi$ in the direction of $\overrightarrow{v}.$ The corresponding
characteristic curves are defined by 
\[
\left\{ \begin{array}{c}
\dfrac{dX(x,\,t;\,s)}{ds}=\overrightarrow{v}\left(x,\,t\right),\\
X(x,\,t;\,t)=x,
\end{array}\right.
\]
with $X(x,t;s)$ being the position of a particle at time $s,$ which
was at $x$ at time $t$. \\
 Now for a given uniform time step size $\tau=\dfrac{\left(t_{f}-t_{0}\right)}{N}>0,$
we can get an approximate value of $X$ at time $t^{n-1}=t_{0}+\left(n-1\right)\tau$
by 
\[
X^{n}(\,x\,):=X(x,\,t^{n};\,t^{n-1})=x-\tau\,\overrightarrow{v}\left(x,\,t^{n}\right)\qquad n=1,\dots N.
\]

Using a fully-implicit scheme, we obtain the semi-discrete form of
$\left(\mathcal{M}^{t}\right)$

\[
\tag{\ensuremath{\mathcal{M}^{\tau}}}\left.\begin{array}{ll}
y^{n}-\tau\,\nabla\cdot\left(\kappa^{n}\,\nabla y^{n}\right)=\xi^{n}+\overline{y}^{n-1}-\overline{\xi}^{n-1} & \text{in }\Omega,\\
\xi^{n}\in\,\mathcal{H}(y^{n}) & \text{ in }\Omega,\\
\dfrac{\partial y^{n}}{\partial n}=0 & \text{on }\Gamma_{N},\\
\dfrac{\partial y^{n}}{\partial n}=u^{n} & \text{on }\Gamma_{C},\\
y^{n}=0 & \text{ on }\Gamma_{D},\\
\xi^{0}=\xi_{0},\quad y^{0}=0 & \text{ in }\Omega,
\end{array}\right\} n=1,\dots N.
\]
where $\phi^{n}\left(\cdot\right):=\phi\left(\cdot,\,t^{n}\right)$
and $\overline{\phi}^{n-1}:=\phi^{n-1}\circ X^{n}.$ To avoid technical
difficulties, it is assumed that $X^{n}$ maps $\Omega$ to itself.
Formulation $\left(\mathcal{M}^{\tau}\right)$ has the advantage of
not being restricted by a CFL condition and large time steps may be
used \cite{Pironneau}.

\subsection*{Variational Formulation}

In the following standard notations for Lebesgue and Sobolev spaces
are employed (see e.g. \cite[Chap. 5]{Evans}). The $L^{2}\left(\Omega\right)$
norm for either vector-valued or real-valued functions is denoted
by $\|\cdot\|.$ The $L^{2}\left(\Gamma_{C}\right)$ norm is specified
by $\|\cdot\|_{\Gamma_{C}}.$ To define a variational formulation
for the semi-discrete problem we introduce the space 
\[
\mathcal{V}:=\left\{ \phi\in H^{1}\left(\Omega\right)\::\:\phi=0\text{ on }\Gamma_{D}\right\} 
\]
endowed with the $H^{1}\left(\Omega\right)$ norm $\|\,\cdot\,\|_{H^{1}\left(\Omega\right)}.$
\\
 At a specific time step $t^{n}$ the variational formulation of the
semi-discrete state equation consists in finding $\left(y^{n},\,\xi^{n}\right)\in\mathcal{V}\times L^{2}\left(\Omega\right)$
such that 
\[
\tag{\ensuremath{\mathcal{WF}^{n}}}\left\{ \begin{array}{ll}
Ay^{n}=\xi^{n}+Bu^{n}+\overline{y}^{n-1}-\overline{\xi}^{n-1}\quad & \text{in }\mathcal{V}^{\prime},\\
\xi^{n}\in\mathcal{H}(y^{n}) & \quad\text{a.e. in }\Omega,
\end{array}\right.
\]
for given $u^{n}\in L^{2}\left(\Gamma_{C}\right),$ $\xi^{n-1}\in L^{2}\left(\Omega\right)$
and $y^{n-1}\in\mathcal{V}.$ $B:L^{2}\left(\Gamma_{C}\right)\mapsto\mathcal{V}^{\prime}$
and $A:\mathcal{V}\mapsto\mathcal{V}^{\prime}$ stand for the linear
bounded operator defined by 
\begin{align*}
\left\langle Bv,\,\phi\right\rangle  & :=\tau\left(v,\,\gamma_{0}\phi\right)_{\Gamma_{C}}\qquad\forall\left(v,\,\phi\right)\in L^{2}\left(\Gamma_{C}\right)\times\mathcal{V},\\
\left\langle A\psi,\,\phi\right\rangle  & :=\left(\psi,\,\phi\right)+\tau\left(\kappa^{n}\,\nabla\psi,\,\nabla\phi\right)\qquad\forall\left(\psi,\,\phi\right)\in\mathcal{V}\times\mathcal{V},
\end{align*}
where $\left\langle \cdot\,,\,\cdot\right\rangle $ is the pairing
between $\mathcal{V}$ and its dual $\mathcal{V}^{\prime}.$ The inner
products in $L^{2}\left(\Omega\right)$ and $L^{2}\left(\Gamma_{C}\right)$
are indicated by $\left(\cdot\,,\,\cdot\right)$ and $\left(\cdot\,,\,\cdot\right)_{\Gamma_{C}}$
respectively. $\gamma_{0}$ is the trace operator in $H^{1}\left(\Omega\right)$
and $\kappa^{n}\in L^{\infty}\left(\Omega\right)$ is such that the
$A$ operator is uniformly coercive with a constant $\underline{\kappa}.$

Regarding the solvability of $\left(\mathcal{WF}^{n}\right)$ we state
the following theorem whose proof is deferred to Appendix A. 
\begin{thm}
\label{thm-1}Let $u^{n}\in L^{2}\left(\Gamma_{C}\right),$ $y^{n-1}\in\mathcal{V}$
and $\xi^{n-1}\in L^{2}\text{\ensuremath{\left(\Omega\right)}}$ such
that $u^{n}\geq0$ a.e. in $\Gamma_{C},$ $y^{n-1}\geq0$ a.e. in
$\Omega$ and $0\leq\xi^{n-1}\leq1$ a.e. in $\Omega.$ Problem $\left(\mathcal{WF}^{n}\right)$
has one and only solution $\left(y^{n},\,\xi^{n}\right)\in\mathcal{V}\times L^{2}\left(\Omega\right)$
that is given by the solution of 
\[
\tag{\ensuremath{\mathcal{CS}^{n}}}\left\{ \begin{array}{ll}
Ay^{n}=\xi^{n}+Bu^{n}+\overline{y}^{n-1}-\overline{\xi}^{n-1} & \quad\text{in }\mathcal{V}^{\prime},\\
y^{n}\geq0,\qquad\xi^{n}\geq0 & \text{a.e. in }\Omega,\\
\left(y^{n},\,\xi^{n}\right)=0.
\end{array}\right.
\]
\end{thm}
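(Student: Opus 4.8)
The plan is to recast the complementarity system $\left(\mathcal{CS}^{n}\right)$ as an obstacle-type variational inequality, solve it by the Lions--Stampacchia theorem, and then read off the sign and orthogonality of the multiplier from the geometry of the constraint cone; the equivalence with $\left(\mathcal{WF}^{n}\right)$ is then closed by a discrete maximum principle. Throughout write $f:=Bu^{n}+\overline{y}^{n-1}-\overline{\xi}^{n-1}\in\mathcal{V}^{\prime}$ and $K:=\left\{ v\in\mathcal{V}:v\geq0\text{ a.e. in }\Omega\right\}$, a nonempty closed convex cone. I claim that $\left(\mathcal{CS}^{n}\right)$ is equivalent to: find $y^{n}\in K$ with $\left\langle Ay^{n}-f,\,v-y^{n}\right\rangle \geq0$ for all $v\in K$, and set $\xi^{n}:=Ay^{n}-f$. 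Since $A$ is bounded and, by the coercivity assumption, $\left\langle A\phi,\,\phi\right\rangle \geq\|\phi\|^{2}+\tau\underline{\kappa}\|\nabla\phi\|^{2}$ is $\mathcal{V}$-elliptic, while $K$ is closed, convex and nonempty, Lions--Stampacchia yields a unique $y^{n}\in K$. Testing the inequality with $v=y^{n}+w$ for $w\in K$ gives $\left\langle \xi^{n},\,w\right\rangle \geq0$ for all $w\geq0$, i.e. $\xi^{n}\geq0$; testing with $v=0$ and $v=2y^{n}$ gives the orthogonality $\left\langle \xi^{n},\,y^{n}\right\rangle =0$ (which becomes the $L^{2}$ relation $\left(y^{n},\,\xi^{n}\right)=0$ once $\xi^{n}\in L^{2}$ is known).

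It remains to show that this solution is genuine, i.e. that $\xi^{n}\in L^{2}\left(\Omega\right)$ with $\xi^{n}\leq1$, so that $\xi^{n}\in\mathcal{H}(y^{n})$ and the $\left(\mathcal{CS}^{n}\right)$ solution in fact solves $\left(\mathcal{WF}^{n}\right)$. On the non-coincidence set $\left\{ y^{n}>0\right\}$ the constraint is inactive and the orthogonality forces $\xi^{n}=0$. The delicate point is the contact set $\left\{ y^{n}=0\right\}$. Using the enthalpy variable $e^{n}:=y^{n}-\xi^{n}$, the state equation reads $e^{n}-\tau\nabla\cdot\left(\kappa^{n}\nabla y^{n}\right)=\overline{e}^{n-1}+Bu^{n}$ with $\overline{e}^{n-1}:=\overline{y}^{n-1}-\overline{\xi}^{n-1}\geq-1$ a.e. by hypothesis. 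Invoking $H^{2}$-regularity of the obstacle problem (legitimate since $\Gamma$ is smooth and $\kappa^{n}$ regular), one has $\nabla y^{n}=0$ and $D^{2}y^{n}=0$ a.e. on $\left\{ y^{n}=0\right\}$, whence $\nabla\cdot\left(\kappa^{n}\nabla y^{n}\right)=0$ there and $\xi^{n}=-\overline{e}^{n-1}\in\left[0,\,1\right]$ a.e. on the contact set. Equivalently, this bound can be produced by comparison with the constant subsolution $e\equiv-1$ of the monotone enthalpy equation. Either way $0\leq\xi^{n}\leq1$, hence $\xi^{n}\in\mathcal{H}(y^{n})$ and $\left(y^{n},\,\xi^{n}\right)$ solves $\left(\mathcal{WF}^{n}\right)$.

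I then close the equivalence by a maximum principle. Let $\left(y^{n},\,\xi^{n}\right)$ be any solution of $\left(\mathcal{WF}^{n}\right)$ and test the equation with the admissible function $\left(y^{n}\right)^{-}:=\max\left\{ -y^{n},\,0\right\} \in\mathcal{V}$. The left-hand side equals $-\|\left(y^{n}\right)^{-}\|^{2}-\tau\left(\kappa^{n}\nabla\left(y^{n}\right)^{-},\,\nabla\left(y^{n}\right)^{-}\right)\leq-\|\left(y^{n}\right)^{-}\|^{2}$. On the right, $\xi^{n}=1$ on $\left\{ y^{n}<0\right\}$ gives $\left(\xi^{n},\,\left(y^{n}\right)^{-}\right)=\|\left(y^{n}\right)^{-}\|_{L^{1}\left(\Omega\right)}$; the control term $\left\langle Bu^{n},\,\left(y^{n}\right)^{-}\right\rangle =\tau\left(u^{n},\,\gamma_{0}\left(y^{n}\right)^{-}\right)_{\Gamma_{C}}\geq0$ since $u^{n}\geq0$; and $\overline{e}^{n-1}\geq-1$ gives $\left(\overline{e}^{n-1},\,\left(y^{n}\right)^{-}\right)\geq-\|\left(y^{n}\right)^{-}\|_{L^{1}\left(\Omega\right)}$. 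Hence the right-hand side is $\geq0$, forcing $\left(y^{n}\right)^{-}=0$, i.e. $y^{n}\geq0$. Together with $\xi^{n}\in\mathcal{H}(y^{n})$ this yields $\xi^{n}\geq0$ and $\left(y^{n},\,\xi^{n}\right)=0$, so every solution of $\left(\mathcal{WF}^{n}\right)$ solves $\left(\mathcal{CS}^{n}\right)$. Uniqueness for $\left(\mathcal{CS}^{n}\right)$ then shows $\left(\mathcal{WF}^{n}\right)$ has exactly one solution, equal to the one constructed above.

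The existence and uniqueness of $y^{n}$, the sign $\xi^{n}\geq0$ and the orthogonality are routine once the problem is written as a variational inequality, and the maximum principle is a short test-function computation. I expect the genuine difficulty to be the regularity and the upper bound of the multiplier, namely that $\xi^{n}$ is an honest $L^{2}$ function with $\xi^{n}\leq1$: this is exactly where the smoothness of $\Omega$ and $\kappa^{n}$ and the sign hypotheses $u^{n}\geq0$, $y^{n-1}\geq0$, $\xi^{n-1}\leq1$ become indispensable. A robust alternative that sidesteps the $H^{2}$ argument is to replace $\mathcal{H}$ by a Lipschitz monotone regularization $\mathcal{H}_{\varepsilon}$ with values in $\left[0,\,1\right]$, solve the resulting strongly monotone semilinear equation, obtain the uniform bounds $0\leq\xi_{\varepsilon}^{n}\leq1$ and $y_{\varepsilon}^{n}\geq-o(1)$, and pass to the limit with a Minty-type monotonicity argument to recover $\xi^{n}\in\mathcal{H}(y^{n})$.
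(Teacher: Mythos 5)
Your overall architecture is genuinely different from the paper's: you solve $(\mathcal{CS}^{n})$ directly as an obstacle variational inequality (Lions--Stampacchia gives existence and uniqueness of $y^{n}$ at once), and your negative-part test showing that \emph{every} solution of $(\mathcal{WF}^{n})$ satisfies $y^{n}\geq0$ is correct and in fact makes explicit a step the paper leaves implicit, since the paper's pointwise equivalence between $\xi^{n}\in\mathcal{H}(y^{n})$ and the complementarity conditions presupposes $y^{n}\geq0$ and its uniqueness argument only treats $(\mathcal{CS}^{n})$. The paper instead proves existence by regularizing $\mathcal{H}$ into a Lipschitz $[0,1]$-valued $\mathcal{H}_{\varepsilon}$, solving the resulting semilinear problem by Schauder's fixed point theorem, proving $y_{\varepsilon}^{n}\geq0$ by essentially the same negative-part computation, and passing to the limit: the bound $0\leq\xi^{n}\leq1$ then comes for free as a weak-$*$ limit of $\mathcal{H}_{\varepsilon}(y_{\varepsilon}^{n})$, and the complementarity follows from $\left(y_{\varepsilon}^{n},\,\mathcal{H}_{\varepsilon}(y_{\varepsilon}^{n})\right)\leq\varepsilon\,\mathrm{meas}(\Omega)$; uniqueness is obtained, as you do, from the variational inequality.

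The genuine gap is the step where you upgrade the multiplier $\xi^{n}=Ay^{n}-f\in\mathcal{V}^{\prime}$ to an $L^{2}(\Omega)$ function with $\xi^{n}\leq1$ via $H^{2}$-regularity of the obstacle problem. Under the stated hypotheses that regularity is not available: the boundary conditions are mixed (Dirichlet on $\Gamma_{D}$, Neumann on $\Gamma_{N}\cup\Gamma_{C}$), which already destroys $H^{2}$-regularity near the junctions even for the unconstrained linear problem; the Neumann datum $u^{n}$ is only in $L^{2}(\Gamma_{C})$, whereas $H^{2}$ up to the boundary needs $H^{1/2}$ data; and $\kappa^{n}$ is only assumed to lie in $L^{\infty}(\Omega)$, so $\nabla\cdot(\kappa^{n}\nabla y^{n})$ cannot be expanded pointwise on the contact set. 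Without this step the multiplier is a priori only a nonnegative element of $\mathcal{V}^{\prime}$, and neither $\xi^{n}\in L^{2}(\Omega)$ nor $\xi^{n}\leq1$ --- both needed for $\xi^{n}\in\mathcal{H}(y^{n})$ --- is established, so the constructed pair is not shown to solve $(\mathcal{WF}^{n})$. A smaller inaccuracy: on the contact set your identity gives $\xi^{n}=\overline{\xi}^{n-1}-\overline{y}^{n-1}$, which is only bounded above by $1$; the lower bound must come from the sign of the multiplier, since this quantity can be negative. The one-sentence ``robust alternative'' you append --- regularize $\mathcal{H}$, obtain uniform $[0,1]$ bounds, and pass to the limit --- is precisely the paper's proof, and is the route you should take to close the argument.
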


\section{Sub-optimal control problem}

In the following we aim to steer the system to a desired configuration,
by acting on the heat flux $u$ at the boundary $\Gamma_{C}.$ We
adopt an instantaneous optimal control concept: at each time step
$t^{n},$ given the previous temperature and solid fraction profiles
$y^{n-1}$ and $\xi^{n-1},$ we solve a time-independent optimal control
problem. Regarding the previous theorem we consider the following
PDE-constrained optimization problems 
\[
\tag{\ensuremath{\mathcal{O}^{n}}}\left\{ \begin{array}{ll}
\min & J(y^{n},\,\xi^{n},\,u^{n})=\dfrac{1}{2}\|y^{n}-y_{d}^{n}\|_{H^{1}\left(\Omega\right)}^{2}+\dfrac{1}{2}\|\xi^{n}-\xi_{d}^{n}\|^{2}+\dfrac{\nu}{2}\|u^{n}\|_{\Gamma_{C}}^{2},\\
\mbox{over } & \left(y^{n},\,\xi^{n},\,u^{n}\right)\in\mathcal{V}\times L^{2}\left(\Omega\right)\times L^{2}\left(\Gamma_{C}\right),\\
\text{s.t.} & Ay^{n}=\xi^{n}+Bu^{n}+\overline{y}^{n-1}-\overline{\xi}^{n-1}\qquad\text{ in }\mathcal{V}^{\prime},\\
 & y^{n}\geq0\qquad\qquad\textrm{ a.e. in }\Omega,\\
 & \xi^{n}\geq0\qquad\qquad\textrm{ a.e. in }\Omega,\\
 & u^{n}\geq0\qquad\qquad\textrm{ a.e. in }\Gamma_{C},\\
 & \left(\xi^{n},\,y^{n}\right)=0.
\end{array}\right.
\]
where $y_{d}^{n}\in H^{1}\left(\Omega\right)$ and $\xi_{d}^{n}\in L^{2}\left(\Omega\right)$
correspond to a desired state at time $t^{n}$ and $\nu$ is a regularization
parameter.

The next lemma serves as a tool to establish some results of this
paper. Its proof is straightforward. 
\begin{lem}
\label{AuxLemma-1} Let $\left(y_{k},\,\xi_{k},\,u_{k}\right){}_{k\in\mathbb{N}}$
be a sequence in $\mathcal{V}\times L^{2}\left(\Omega\right)\times L^{2}\left(\Gamma_{C}\right)$
such that $\left(\xi_{k},\,u_{k}\right)_{k\in\mathbb{N}}$ is bounded
in $L^{2}\left(\Omega\right)\times L^{2}\left(\Gamma_{C}\right)$
and 
\begin{align}
Ay_{k}=Bu_{k}+\xi_{k}+\overline{y}^{n-1}-\overline{\xi}^{n-1} & \qquad\text{in }\mathcal{V}^{\prime},\label{AuxLemmaEq1-1-1}\\
y_{k}\geq0,\quad\xi_{k}\geq0 & \qquad\text{a.e. in }\Omega,\label{AuxLemmaEq3Prime-1-1}\\
u_{k}\geq0\quad & \text{\,a.e. in }\Gamma_{C}.
\end{align}
Then, there exists a sub-sequence still denoted by $(y_{k},\,u_{k},\,\xi_{k})_{k\in\mathbb{N}}$
such that 
\begin{eqnarray}
u_{k} & \rightharpoonup & u\quad\text{ in }L^{2}\left(\Gamma_{C}\right),\label{ConvUGlobal-3}\\
\xi_{k} & \rightharpoonup & \xi\quad\text{ in }L^{2}\left(\Omega\right),\label{ConvLambdaGlobal-3}\\
y_{k} & \longrightarrow & y\quad\text{ in }L^{2}\left(\Omega\right),\label{ConvYGlobal-H1-3}\\
y_{k} & \rightharpoonup & y\quad\text{ in }\mathcal{V},\label{ConvYGlobal-L2-3}
\end{eqnarray}
with $(y,\,\xi,\,u)$ being an element of $\mathcal{V}\times L^{2}\left(\Omega\right)\times L^{2}\left(\Gamma_{C}\right)$
satisfying 
\begin{align}
Ay=Bu+\xi+\overline{y}^{n-1}-\overline{\xi}^{n-1} & \qquad\text{in }\mathcal{V}^{\prime},\label{AuxLemmaLimEq1}\\
y\geq0,\quad\xi\geq0 & \qquad\text{a.e. in }\Omega,\label{AuxLemmaLimEq2}\\
u\geq0 & \qquad\text{a.e. in }\Gamma_{C}.\label{AuxLemmaLimEq3}
\end{align}
Further 
\begin{align}
\underset{k\to\infty}{\lim}\left(\xi_{k},\,y_{k}\right)= & \left(\xi,\,y\right).\label{AuxLemmaLimEq4}
\end{align}
In particular, if $\left(\xi_{k},\,y_{k}\right)=0$ then $\left(\xi,\,y\right)=0.$ 
\end{lem}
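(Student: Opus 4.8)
The plan is to run a standard weak-compactness argument, the one genuinely substantive point being a weak-times-strong passage to the limit in the complementarity product. First I would extract convergent subsequences. Since $\left(\xi_k,u_k\right)_k$ is bounded in the Hilbert space $L^2\left(\Omega\right)\times L^2\left(\Gamma_C\right)$, bounded sequences there being weakly sequentially precompact, I may pass to a subsequence with $u_k\rightharpoonup u$ in $L^2\left(\Gamma_C\right)$ and $\xi_k\rightharpoonup\xi$ in $L^2\left(\Omega\right)$, giving (\ref{ConvUGlobal-3}) and (\ref{ConvLambdaGlobal-3}). To control $y_k$, I would test the state equation (\ref{AuxLemmaEq1-1-1}) with $y_k$ itself and invoke the uniform coercivity of $A$: this yields $\underline{\kappa}\,\|y_k\|_{H^1\left(\Omega\right)}^2\le\left\langle Ay_k,\,y_k\right\rangle=\left\langle Bu_k+\xi_k+\overline{y}^{n-1}-\overline{\xi}^{n-1},\,y_k\right\rangle$, and since $B$ is bounded, $\left(\xi_k,u_k\right)_k$ is bounded, and the remaining terms are fixed, the right-hand side is uniformly bounded in $\mathcal{V}'$; hence $\left(y_k\right)_k$ is bounded in $\mathcal{V}$. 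A further subsequence then gives $y_k\rightharpoonup y$ in $\mathcal{V}$, which is (\ref{ConvYGlobal-L2-3}). The compact embedding $H^1\left(\Omega\right)\hookrightarrow L^2\left(\Omega\right)$, available because $\Omega$ is bounded with smooth boundary, upgrades this to $y_k\longrightarrow y$ strongly in $L^2\left(\Omega\right)$, establishing (\ref{ConvYGlobal-H1-3}).

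Next I would pass to the limit in the state equation. As $A$, $B$ and the canonical embedding $L^2\left(\Omega\right)\hookrightarrow\mathcal{V}'$ are all bounded and linear, they send weak limits to weak limits: testing against an arbitrary $\phi\in\mathcal{V}$, each of $\left\langle Ay_k,\,\phi\right\rangle$, $\left\langle Bu_k,\,\phi\right\rangle$ and $\left(\xi_k,\,\phi\right)$ converges to the corresponding term formed with $y$, $u$, $\xi$, so (\ref{AuxLemmaEq1-1-1}) passes to (\ref{AuxLemmaLimEq1}) in $\mathcal{V}'$. For the sign conditions (\ref{AuxLemmaLimEq2})--(\ref{AuxLemmaLimEq3}) I would use that the non-negative cones of $L^2\left(\Omega\right)$ and $L^2\left(\Gamma_C\right)$ are closed and convex, hence weakly closed; since each $y_k$, $\xi_k$, $u_k$ lies in the respective cone and converges (weakly or strongly) to $y$, $\xi$, $u$, the limits inherit non-negativity.

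The main step, and the only one needing care, is the convergence of the complementarity product (\ref{AuxLemmaLimEq4}). Here I would exploit that one factor converges weakly and the other strongly by writing
\[
\left(\xi_k,\,y_k\right)=\left(\xi_k,\,y\right)+\left(\xi_k,\,y_k-y\right).
\]
The first term tends to $\left(\xi,\,y\right)$ because $\xi_k\rightharpoonup\xi$ and $y$ is fixed, while the second is bounded by $\|\xi_k\|\,\|y_k-y\|$, which vanishes since $\left(\xi_k\right)_k$ is bounded and $y_k\longrightarrow y$ strongly in $L^2\left(\Omega\right)$ by the first step. This gives $\lim_{k\to\infty}\left(\xi_k,\,y_k\right)=\left(\xi,\,y\right)$. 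The ``in particular'' assertion is then immediate: if $\left(\xi_k,\,y_k\right)=0$ for every $k$, the limit forces $\left(\xi,\,y\right)=0$. I expect no real obstacle here; the whole argument hinges on the compact Sobolev embedding being available to convert the weak bound on $y_k$ into strong $L^2$ convergence, which is precisely what allows the weak-strong product to pass to the limit and thereby the complementarity relation to survive in the limit.
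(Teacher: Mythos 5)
Your argument is correct and is precisely the standard weak-compactness/compact-embedding/weak-strong-product reasoning that the paper has in mind when it declares the proof of Lemma \ref{AuxLemma-1} ``straightforward'' and omits it. All the ingredients you invoke (uniform coercivity of $A$, boundedness of $B$ and of the trace, the compact embedding $H^{1}\left(\Omega\right)\hookrightarrow L^{2}\left(\Omega\right)$, weak closedness of the non-negative cones) are available in the paper's setting, so nothing is missing.
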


\begin{thm}
Problem $\left(\mathcal{O}^{n}\right)$ has at least one solution. 
\end{thm}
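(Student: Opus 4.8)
The plan is to prove existence of a minimizer for $(\mathcal{O}^{n})$ by the direct method of the calculus of variations.

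First I would check that the feasible set is nonempty: under the standing sign hypotheses on the data (which propagate in time by induction), Theorem~\ref{thm-1} guarantees that the state system $\left(\mathcal{WF}^{n}\right)$, equivalently $\left(\mathcal{CS}^{n}\right)$, admits a solution for the particular choice $u^{n}=0$, so at least one admissible triple exists. Since $J\geq0$, the infimum $m:=\inf J$ over the feasible set is a finite nonnegative number, and I would fix a minimizing sequence $\left(y_{k},\,\xi_{k},\,u_{k}\right)$ of feasible triples with $J(y_{k},\,\xi_{k},\,u_{k})\to m$.

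Next I would extract boundedness from the cost. Because $J(y_{k},\,\xi_{k},\,u_{k})$ is bounded, the three terms force $\|y_{k}-y_{d}^{n}\|_{H^{1}\left(\Omega\right)}$, $\|\xi_{k}-\xi_{d}^{n}\|$ and, using $\nu>0$, $\|u_{k}\|_{\Gamma_{C}}$ to be bounded; hence $\left(y_{k}\right)$ is bounded in $\mathcal{V}$, and $\left(\xi_{k},\,u_{k}\right)$ is bounded in $L^{2}\left(\Omega\right)\times L^{2}\left(\Gamma_{C}\right)$. This is exactly the hypothesis of Lemma~\ref{AuxLemma-1}, so after passing to a subsequence I obtain limits $\left(y,\,\xi,\,u\right)$ with $u_{k}\rightharpoonup u$ in $L^{2}\left(\Gamma_{C}\right)$, $\xi_{k}\rightharpoonup\xi$ in $L^{2}\left(\Omega\right)$, $y_{k}\to y$ in $L^{2}\left(\Omega\right)$ and $y_{k}\rightharpoonup y$ in $\mathcal{V}$; the lemma already certifies that $\left(y,\,\xi,\,u\right)$ satisfies the state equation, the sign constraints, and, crucially, the complementarity relation $\left(\xi,\,y\right)=0$. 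Thus the candidate limit is itself feasible.

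Finally I would pass to the limit in the cost by weak lower semicontinuity. Each term of $J$ is the square of a norm, hence convex and continuous, and therefore weakly sequentially lower semicontinuous; the weak convergences $y_{k}\rightharpoonup y$ in $H^{1}\left(\Omega\right)$, $\xi_{k}\rightharpoonup\xi$ in $L^{2}\left(\Omega\right)$ and $u_{k}\rightharpoonup u$ in $L^{2}\left(\Gamma_{C}\right)$ give $J(y,\,\xi,\,u)\leq\liminf_{k}J(y_{k},\,\xi_{k},\,u_{k})=m$. Since $\left(y,\,\xi,\,u\right)$ is feasible, this forces $J(y,\,\xi,\,u)=m$, so it is a minimizer.

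I expect the only delicate point to be the passage to the limit in the complementarity constraint $\left(\xi_{k},\,y_{k}\right)=0$, where weak convergence of $\xi_{k}$ alone would not suffice to pass to the limit in the bilinear pairing. This is precisely where the strong $L^{2}$-convergence $y_{k}\to y$ supplied by Lemma~\ref{AuxLemma-1} is indispensable, and is the reason the lemma is invoked rather than a naive compactness argument.
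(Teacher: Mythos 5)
Your proposal is correct and follows essentially the same route as the paper: a minimizing sequence, boundedness of $\left(\xi_{k},\,u_{k}\right)$ from the coercive terms of $J$, Lemma~\ref{AuxLemma-1} to produce a feasible weak limit (including the complementarity relation via the strong $L^{2}$ convergence of $y_{k}$), and weak lower semicontinuity of $J$ to conclude. The only addition is your explicit verification that the feasible set is nonempty via Theorem~\ref{thm-1}, which the paper leaves implicit.
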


\begin{proof}
Let $\left(y_{k}^{n},\,\xi_{k}^{n},\,u_{k}^{n}\right)_{k\in\mathbb{N}}\in\mathcal{V}\times L^{2}\left(\Omega\right)\times L^{2}\left(\Gamma_{C}\right)$
be a minimizing sequence of $J$ over the feasible set of $\left(\mathcal{O}^{n}\right).$
Then $\left(\xi_{k}^{n},\,u_{k}^{n}\right)_{k\in\mathbb{N}}$ is bounded
in $L^{2}\left(\Omega\right)\times L^{2}\left(\Gamma_{C}\right).$
From Lemma \ref{AuxLemma-1}, there exists a feasible element $(y^{n},\,\xi^{n},\,u^{n})$
such that up to a sub-sequence $\left(y_{k}^{n},\,\xi_{k}^{n},\,u_{k}^{n}\right)_{k\in\mathbb{N}}$
converges weakly to $\left(y^{n},\,\xi^{n},\,u^{n}\right)$ in $\mathcal{V}\times L^{2}\left(\Omega\right)\times L^{2}\left(\Gamma_{C}\right).$
It is immediate to verify that $J$ is weakly lower semi-continuous
which proves that $\left(y^{n},\,\xi^{n},\,u^{n}\right)$ is a solution
of $\left(\mathcal{O}^{n}\right).$ 
\end{proof}

\section{Penalized-regularized optimal control problem}

In this section we propose a function space approach to solve the
problem $\left(\mathcal{O}^{n}\right).$ Inspired by \cite{Loebhard},
we process a series of sub-problems $\left(\mathcal{O}_{\gamma}^{n}\right)_{\gamma>0}$
defined by 
\begin{equation}
\tag{\ensuremath{\mathcal{O}_{\gamma}^{n}}}\left\{ \begin{array}{ll}
\min & J_{\gamma}(y^{n},\,\xi^{n},\,u^{n}):=J(y^{n},\,\xi^{n},\,u^{n})+\gamma\left(\xi^{n},\,y^{n}+\varepsilon_{\gamma}\xi^{n}\right),\\
\mbox{over } & \left(y^{n},\,\xi^{n},\,u^{n}\right)\in\mathcal{V}\times L^{2}\left(\Omega\right)\times L^{2}\left(\Gamma_{C}\right),\\
\text{s.t.} & Ay^{n}=Bu^{n}+\xi^{n}+\overline{y}^{n-1}-\overline{\xi}^{n-1}\qquad\text{ in }\mathcal{V}^{\prime},\\
 & y^{n}+\varepsilon_{\gamma}\xi^{n}\geq0\qquad\qquad\textrm{ a.e. in }\Omega,\\
 & \xi^{n}\geq0\qquad\qquad\textrm{ a.e. in }\Omega,\\
 & u^{n}\geq0\qquad\qquad\textrm{ a.e. in }\Gamma_{C},
\end{array}\right.\label{Pgamma}
\end{equation}
where $\gamma$ and $\varepsilon_{\gamma}$ are positive parameters
such that $\gamma\to\infty$ and $\varepsilon_{\gamma}\to0.$ More
precisely we assume that 
\[
\varepsilon_{\gamma}\gamma\underset{\gamma\to\infty}{\longrightarrow}0.
\]
The complementarity constraint will be increasingly satisfied by letting
$\gamma\to\infty$ which provide a path-following method for the solution
of the original control problems $\left(\ensuremath{\mathcal{O}^{n}}\right).$
Further we will show that Lagrange multipliers for $\left(\ensuremath{\mathcal{O}_{\gamma}^{n}}\right)$
are regular functions, so that using, for instance, a conform finite
elements discretization in numerical experiments is justified.

Here and in the following $C$ is a generic constant not depending
on $\gamma.$

\subsection*{Solvability and consistency of $\left(\mathcal{O}_{\gamma}^{n}\right)_{\gamma>0}$}
\begin{thm}
\label{ThmGlobSolPenPb} For every fixed $\gamma>0$ the penalized-regularized
problem $\left(\mathcal{O}_{\gamma}^{n}\right)$ has at least one
solution $\left(y_{\gamma}^{n},\,\xi_{\gamma}^{n},\,u_{\gamma}^{n}\right)$
in $\mathcal{V}\times L^{2}\left(\Omega\right)\times L^{2}\left(\Gamma_{C}\right).$
Furthermore, there exist $\left(y_{*}^{n},\,\xi_{*}^{n},\,u_{*}^{n}\right)$
in $\mathcal{V}\times L^{2}\left(\Omega\right)\times L^{2}\left(\Gamma_{C}\right)$
and a sub-sequence $\left(y_{\gamma}^{n},\,\xi_{\gamma}^{n},\,u_{\gamma}^{n}\right)_{\gamma>0}$
such that 
\begin{eqnarray}
u_{\gamma}^{n} & \rightharpoonup & u_{*}\quad\text{ in }L^{2}\left(\Gamma_{C}\right),\label{ConvUGammaGlobal}\\
\xi_{\gamma}^{n} & \rightharpoonup & \text{\ensuremath{\xi}}_{*}\quad\text{ in }L^{2}\left(\Omega\right),\label{ConvLambdaGammaGlobal}\\
y_{\gamma}^{n} & \longrightarrow & y_{*}^{n}\quad\text{ in }L^{2}\left(\Omega\right),\label{ConvYGammaGlobal}\\
y_{\gamma}^{n} & \rightharpoonup & y_{*}^{n}\quad\text{ in }\mathcal{V},\label{ConvYGammaGlobal-w}
\end{eqnarray}
and $\left(y_{*}^{n},\,\xi_{*}^{n},\,u_{*}^{n}\right)$ is a solution
of $\left(\mathcal{O}^{n}\right).$ 
\end{thm}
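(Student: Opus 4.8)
The plan is to establish the two assertions separately: first the solvability of $\left(\mathcal{O}_{\gamma}^{n}\right)$ for each fixed $\gamma>0$ by the direct method of the calculus of variations, and then the consistency as $\gamma\to\infty$ by a compactness-and-comparison argument that reuses the convergence machinery of Lemma \ref{AuxLemma-1}. For the existence part I would fix $\gamma$ and first note that the admissible set is nonempty: with $u^{n}=0$, Theorem \ref{thm-1} produces a pair $\left(y^{n},\xi^{n}\right)$ with $y^{n}\geq0$, $\xi^{n}\geq0$ and $\left(\xi^{n},y^{n}\right)=0$, which in particular satisfies $y^{n}+\varepsilon_{\gamma}\xi^{n}\geq0$ and is therefore feasible for $\left(\mathcal{O}_{\gamma}^{n}\right)$. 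Given a minimizing sequence, the decisive structural observation is that on the admissible set the penalty $\gamma\left(\xi^{n},y^{n}+\varepsilon_{\gamma}\xi^{n}\right)$ is non-negative, so $J_{\gamma}\geq J\geq0$; coercivity of $J$ in its three arguments then bounds $\left(\xi_{k},u_{k}\right)$ in $L^{2}\left(\Omega\right)\times L^{2}\left(\Gamma_{C}\right)$ and, through the state equation, $y_{k}$ in $\mathcal{V}$. The convergences can be extracted exactly as in Lemma \ref{AuxLemma-1}: linearity of $A$ and $B$ passes the state equation to the limit, the closed convex cones $\left\{\xi\geq0\right\}$ and $\left\{u\geq0\right\}$ are weakly closed, and $y_{k}+\varepsilon_{\gamma}\xi_{k}\rightharpoonup y+\varepsilon_{\gamma}\xi\geq0$, so the limit stays admissible. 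The only point needing care is weak lower semicontinuity of $J_{\gamma}$: the quadratic part is standard, while for the penalty I would invoke the weak–strong pairing $\left(\xi_{k},y_{k}\right)\to\left(\xi,y\right)$ of \eqref{AuxLemmaLimEq4} together with weak lower semicontinuity of $\varepsilon_{\gamma}\|\xi_{k}\|^{2}$.

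For consistency the key is an estimate on $\left(y_{\gamma}^{n},\xi_{\gamma}^{n},u_{\gamma}^{n}\right)$ uniform in $\gamma$. I would compare against a fixed reference point admissible for $\left(\mathcal{O}^{n}\right)$, again the $u^{n}=0$ state $\left(\tilde{y},\tilde{\xi}\right)$ from Theorem \ref{thm-1}, which is simultaneously admissible for every $\left(\mathcal{O}_{\gamma}^{n}\right)$ and has penalty value $\gamma\left(\tilde{\xi},\tilde{y}+\varepsilon_{\gamma}\tilde{\xi}\right)=\gamma\varepsilon_{\gamma}\|\tilde{\xi}\|^{2}$, which remains bounded precisely because $\gamma\varepsilon_{\gamma}\to0$. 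Optimality then gives $J_{\gamma}\left(y_{\gamma}^{n},\xi_{\gamma}^{n},u_{\gamma}^{n}\right)\leq J\left(\tilde{y},\tilde{\xi},0\right)+C$ uniformly in $\gamma$, and since $J\leq J_{\gamma}$ on the admissible set this yields uniform bounds through coercivity of $J$, together with the crucial consequence $\gamma\left(\xi_{\gamma}^{n},y_{\gamma}^{n}+\varepsilon_{\gamma}\xi_{\gamma}^{n}\right)\leq C$, i.e. $\left(\xi_{\gamma}^{n},y_{\gamma}^{n}+\varepsilon_{\gamma}\xi_{\gamma}^{n}\right)\to0$. From these bounds, the state equation, and the compact embedding of $\mathcal{V}$ into $L^{2}\left(\Omega\right)$, I would extract the stated convergences \eqref{ConvUGammaGlobal}–\eqref{ConvYGammaGlobal-w}.

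It then remains to verify that the limit $\left(y_{*}^{n},\xi_{*}^{n},u_{*}^{n}\right)$ is admissible for $\left(\mathcal{O}^{n}\right)$ and optimal. The state equation and the signs $\xi_{*}^{n}\geq0$, $u_{*}^{n}\geq0$ pass to the limit as above; for $y_{*}^{n}\geq0$ I would use that $\varepsilon_{\gamma}\xi_{\gamma}^{n}\to0$ strongly in $L^{2}\left(\Omega\right)$ (bounded $\xi_{\gamma}^{n}$, $\varepsilon_{\gamma}\to0$), so that $y_{\gamma}^{n}+\varepsilon_{\gamma}\xi_{\gamma}^{n}\to y_{*}^{n}$ strongly and the nonnegativity survives the strong limit. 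For the complementarity I would combine $\left(\xi_{\gamma}^{n},y_{\gamma}^{n}+\varepsilon_{\gamma}\xi_{\gamma}^{n}\right)\to0$ with the weak–strong convergence $\left(\xi_{\gamma}^{n},y_{\gamma}^{n}\right)\to\left(\xi_{*}^{n},y_{*}^{n}\right)$ and $\varepsilon_{\gamma}\|\xi_{\gamma}^{n}\|^{2}\to0$ to conclude $\left(\xi_{*}^{n},y_{*}^{n}\right)=0$, which with the established signs gives $\xi_{*}^{n}y_{*}^{n}=0$ a.e.

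Optimality of the limit follows from a second comparison: any admissible $\left(\hat{y},\hat{\xi},\hat{u}\right)$ for $\left(\mathcal{O}^{n}\right)$ is admissible for $\left(\mathcal{O}_{\gamma}^{n}\right)$ with penalty $\gamma\varepsilon_{\gamma}\|\hat{\xi}\|^{2}$, so optimality of $\left(y_{\gamma}^{n},\xi_{\gamma}^{n},u_{\gamma}^{n}\right)$ together with $J\leq J_{\gamma}$ gives $J\left(y_{\gamma}^{n},\xi_{\gamma}^{n},u_{\gamma}^{n}\right)\leq J\left(\hat{y},\hat{\xi},\hat{u}\right)+\gamma\varepsilon_{\gamma}\|\hat{\xi}\|^{2}$; passing to the $\liminf$ and using weak lower semicontinuity of $J$ and $\gamma\varepsilon_{\gamma}\to0$ yields $J\left(y_{*}^{n},\xi_{*}^{n},u_{*}^{n}\right)\leq J\left(\hat{y},\hat{\xi},\hat{u}\right)$, so the feasible limit is a minimizer. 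I expect the main obstacle to be the bookkeeping of the $\gamma$-uniform estimates so that the scaling hypothesis $\gamma\varepsilon_{\gamma}\to0$ is deployed exactly where it is needed — both to keep the reference and comparison penalties bounded and to annihilate the regularization terms in the limit — while extracting from the single penalty bound the simultaneous vanishing of the complementarity residual that forces $\left(\xi_{*}^{n},y_{*}^{n}\right)=0$.
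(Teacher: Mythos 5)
Your proposal is correct and follows essentially the same route as the paper: the direct method for each fixed $\gamma$, comparison with points of $\mathcal{F}^{n}\subseteq\mathcal{F}_{\gamma}^{n}$ to get $\gamma$-uniform bounds and the decay $\left(\xi_{\gamma}^{n},\,y_{\gamma}^{n}+\varepsilon_{\gamma}\xi_{\gamma}^{n}\right)\leq C/\gamma$, the compactness of Lemma \ref{AuxLemma-1} with the weak--strong pairing to pass the complementarity to the limit, and the hypothesis $\gamma\varepsilon_{\gamma}\to0$ to annihilate the comparison penalty in the final optimality inequality. You in fact supply some details the paper leaves implicit, notably the non-emptiness of the feasible set and the recovery of $y_{*}^{n}\geq0$ from the constraint $y_{\gamma}^{n}+\varepsilon_{\gamma}\xi_{\gamma}^{n}\geq0$ via the strong $L^{2}$ convergence.
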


\begin{proof}
The existence of a solution $\left(y_{\gamma}^{n},\,\xi_{\gamma}^{n},\,u_{\gamma}^{n}\right)$
to $\left(\mathcal{O}_{\gamma}^{n}\right)$ follows from Lemma \ref{AuxLemma-1}
and $J_{\gamma}$ weak lower semi-continuity applied to a minimizing
sequence. We recall that 
\begin{align*}
J_{\gamma}\left(y_{\gamma}^{n},\,\xi_{\gamma}^{n},\,u_{\gamma}^{n}\right) & :=J\left(y_{\gamma}^{n},\,\xi_{\gamma}^{n},\,u_{\gamma}^{n}\right)+\gamma\left(\xi_{\gamma}^{n},\,y_{\gamma}^{n}+\varepsilon_{\gamma}\xi_{\gamma}^{n}\right),\\
 & =J\left(y_{\gamma}^{n},\,\xi_{\gamma}^{n},\,u_{\gamma}^{n}\right)+\gamma\left(\xi_{\gamma}^{n},\,y_{\gamma}^{n}\right)+\gamma\varepsilon_{\gamma}\|\xi_{\gamma}^{n}\|^{2}.
\end{align*}
On the other hand 
\begin{align}
J_{\gamma}\left(y_{\gamma}^{n},\,\xi_{\gamma}^{n},\,u_{\gamma}^{n}\right) & \leq J_{\gamma}\left(\tilde{y},\,\tilde{\xi},\,\tilde{u}\right),\label{PenRelProbGlobSolPropProof-Eq1}\\
 & \leq J\left(\tilde{y},\,\tilde{\xi},\,\tilde{u}\right)+\gamma\varepsilon_{\gamma}\|\tilde{\xi}\|^{2},\\
 & \leq J\left(\tilde{y},\,\tilde{\xi},\,\tilde{u}\right)+C\|\tilde{\xi}\|^{2},
\end{align}
for all $\left(\tilde{y},\,\tilde{\xi},\,\tilde{u}\right)$ in $\mathcal{F}^{n}.$
Notice that $\mathcal{F}^{n}\subseteq\mathcal{F}_{\gamma}^{n}$ for
all $\gamma>0$ with $\mathcal{F}^{n}$ and $\mathcal{F}_{\gamma}^{n}$
being the feasible sets of $\left(\mathcal{O}^{n}\right)$ and $\left(\mathcal{O}_{\gamma}^{n}\right)$
respectively. \\
 Therefore, there exists a constant $C$ not depending on $\gamma$
such that 
\begin{align}
\|\xi_{\gamma}^{n}\|\leq C,\qquad\|u_{\gamma}^{n}\|\leq C,\qquad0\leq\left(\xi_{\gamma}^{n},\,y_{\gamma}^{n}+\varepsilon_{\gamma}\xi_{\gamma}^{n}\right)\leq\dfrac{C}{\gamma}, & \qquad\forall\gamma>0.\label{EstimationsProofConvGamma}
\end{align}
Then, by Lemma \ref{AuxLemma-1}, there exist a sub-sequence still
denoted by $\left(y_{\gamma},\,\xi_{\gamma},\,u_{\gamma}\right)\in\mathcal{F}_{\gamma}$
and $\left(y_{*}^{n},\,\xi_{*}^{n},\,u_{*}^{n}\right)$ in $\mathcal{V}\times L^{2}\left(\Omega\right)\times L^{2}\left(\Gamma_{C}\right)$
such that \eqref{ConvUGammaGlobal}-\eqref{ConvYGammaGlobal-w} hold
and 
\begin{align*}
Ay_{*}^{n}=Bu_{*}^{n}+\xi_{*}^{n}+\overline{y}^{n-1}-\overline{\xi}^{n-1} & \qquad\text{in }\mathcal{V}^{\prime},\\
y_{*}^{n}\geq0 & \qquad\text{a.e. in }\Omega,\\
\xi_{*}^{n}\geq0 & \qquad\text{a.e. in }\Omega,\\
u_{*}^{n}\geq0 & \qquad\text{a.e. in }\Gamma_{C}.
\end{align*}
From \eqref{ConvLambdaGammaGlobal}, \eqref{ConvYGammaGlobal}, \eqref{EstimationsProofConvGamma}
and $\underset{\gamma\to\infty}{\lim}\varepsilon_{\gamma}=0$ we have
\[
\lim_{\gamma\to\infty}\left(\xi_{\gamma}^{n},\,y_{\gamma}^{n}+\varepsilon_{\gamma}\xi_{\gamma}^{n}\right)=\lim_{\gamma\to\infty}\varepsilon_{\gamma}\|\xi_{\gamma}^{n}\|^{2}+\lim_{\gamma\to\infty}\left(\xi_{\gamma}^{n},\,y_{\gamma}^{n}\right)=\left(\xi_{*}^{n},\,y_{*}^{n}\right).
\]
\eqref{EstimationsProofConvGamma} yields additionally that 
\[
\lim_{\gamma\to\infty}\left(\xi_{\gamma}^{n},\,y_{\gamma}^{n}+\varepsilon_{\gamma}\xi_{\gamma}^{n}\right)=0.
\]
Hence, $\left(\xi_{*}^{n},\,y_{*}^{n}\right)=0$ and $\left(y_{*}^{n},\,\xi_{*}^{n},\,u_{*}^{n}\right)\in\mathcal{F}^{n}.$
\\
 Now from the weak lower semi-continuity of $J$ we have 
\begin{align*}
J\left(y_{*}^{n},\,\xi_{*}^{n},\,u_{*}^{n}\right) & \leq\liminf_{\gamma\to\infty}J\left(y_{\gamma}^{n},\,\xi_{\gamma}^{n},\,u_{\gamma}^{n}\right).
\end{align*}
Since $J\leq J_{\gamma},$ $\mathcal{F}^{n}\subseteq\mathcal{F}_{\gamma}^{n}$
and $\varepsilon_{\gamma}\gamma\underset{\gamma\to\infty}{\longrightarrow}0$
it follows that 
\begin{align*}
J\left(y_{*}^{n},\,\xi_{*}^{n},\,u_{*}^{n}\right) & \leq\liminf_{\gamma\to\infty}J_{\gamma}\left(y_{\gamma}^{n},\,\xi_{\gamma}^{n},\,u_{\gamma}^{n}\right),\\
 & \leq\liminf_{\gamma\to\infty}J_{\gamma}\left(\tilde{y},\,\tilde{\xi},\,\tilde{u}\right),\\
 & \leq J\left(\tilde{y},\,\tilde{\xi},\,\tilde{u}\right)+\lim_{\gamma\to\infty}\gamma\varepsilon_{\gamma}\|\tilde{\xi}\|^{2},\\
 & \leq J\left(\tilde{y},\,\tilde{\xi},\,\tilde{u}\right),
\end{align*}
for any $\left(\tilde{y},\,\tilde{\xi},\,\tilde{u}\right)$ in $\mathcal{F}.$\\
 Consequently, $\left(y_{*}^{n},\,\xi_{*}^{n},\,u_{*}^{n}\right)$
is a solution to the limit optimal control problem $\left(\mathcal{O}^{n}\right).$ 
\end{proof}

\subsection*{First order optimality conditions for $\left(\mathcal{O}_{\gamma}^{n}\right)_{\gamma>0}$}

In order to derive the first order optimality system for the regularized-penalized
problems $\left(\mathcal{O}_{\gamma}^{n}\right)_{\gamma>0}$ we check
the Zowe-Kurcyusz constraints qualification \cite{ZoweKurcyusz-1979,Troeltzsch}
which we recall in Appendix B. In our contest it requires the existence
of $\left(c_{y},\,c_{\xi},\,c_{u},\,\zeta,\,\lambda\right)$ in $\mathcal{V}\times L^{2}\left(\Omega\right)\times L^{2}\left(\Gamma_{C}\right)\times L^{2}\left(\Omega\right)\times\mathbb{R}$
such that the following system holds

\begin{align*}
\tag{\ensuremath{\mathcal{CQ}}}\left\{ \begin{alignedat}{1}Ac_{y}=Bc_{u}+c_{\xi}+z^{\prime} & \text{ in }\mathcal{V}^{\prime},\\
c_{y}+\varepsilon_{\gamma}c_{\xi}-\zeta+\lambda\left(y_{\gamma}^{n}+\varepsilon_{\gamma}\xi_{\gamma}^{n}\right)=z & \textrm{ a.e. in }L^{2}\left(\Omega\right),\\
c_{\xi}\geq0,\qquad\zeta\geq0 & \text{ a.e. in }L^{2}\left(\Omega\right),\\
c_{u}\geq0 & \text{ a.e. in }L^{2}\left(\Gamma_{C}\right),\\
\lambda\ge & 0,
\end{alignedat}
\right.
\end{align*}
for a given $\left(z^{\prime},\,z\right)\in\mathcal{V}^{\prime}\times L^{2}\left(\Omega\right).$
First, we pose 
\begin{align*}
\lambda & =0,\qquad c_{u}=0,\qquad c_{\xi}=c_{\xi,1}+c_{\xi,2},\qquad\zeta=\zeta_{1}+\zeta_{2}
\end{align*}
with 
\[
c_{\xi,2}=\dfrac{1}{\varepsilon_{\gamma}}\max\left(0,\,z\right),\qquad\zeta_{2}=\max\left(0,\,-z\right).
\]
Then, we choose $\zeta_{1}\in\mathcal{V}$ such that the system 
\begin{align}
A\zeta_{1} & =c_{\xi,2}+z^{\prime}+\Lambda\qquad\text{in }\mathcal{V}^{\prime},\label{ZoweIn1-1}\\
\Lambda\geq0,\qquad\zeta_{1}\geq0,\qquad & <\Lambda,\,\zeta_{1}>=0,\label{ZoweIn2-1}
\end{align}
holds for some $\Lambda\in\mathcal{V}^{\prime}.$ We mention that
\eqref{ZoweIn1-1}-\eqref{ZoweIn2-1} is well-posed by the theory
of variational inequalities \cite{Stampacchia}. Finally we assign
to $c_{\xi,1}$ the solution of the following elliptic partial differential
equation 
\begin{align}
\varepsilon_{\gamma}Ac_{\xi,1}+c_{\xi,1} & =\Lambda\qquad\text{in }\mathcal{V}^{\prime}.\label{ZoweEq-1}
\end{align}
Observe that $c_{\xi,1}\geq0$ by a standard maximum principle \cite[p. 327]{Evans}.
Now for 
\[
c_{y}=\zeta_{1}-\varepsilon_{\gamma}c_{\xi,1},
\]
we obtain 
\begin{align*}
c_{y}+\varepsilon_{\gamma}c_{\xi}-\zeta & =c_{y}+\varepsilon_{\gamma}c_{\xi,1}-\zeta_{1}+\varepsilon_{\gamma}c_{\xi,2}-\zeta_{2},\\
 & =\varepsilon_{\gamma}c_{\xi,2}-\zeta_{2},\\
 & =\max\left(0,\,z\right)-\max\left(0,\,-z\right)=z.
\end{align*}
and 
\begin{align*}
Ac_{y} & =A\zeta_{1}-\varepsilon_{\gamma}Ac_{\xi,1},\\
 & =c_{\xi,2}+z^{\prime}+\Lambda-\varepsilon_{\gamma}Ac_{\xi,1},\\
 & =c_{\xi,2}+z^{\prime}+c_{\xi,1},\\
 & =c_{\xi}+z^{\prime}.
\end{align*}
Here we have used \eqref{ZoweIn1-1}-\eqref{ZoweIn2-1} and \eqref{ZoweEq-1}.
Therefore, problem $\left(\mathcal{O}_{\gamma}^{n}\right)$ constraints
are qualified and the next proposition holds true. 
\begin{prop}
Let $\left(y_{\gamma}^{n},\,\xi_{\gamma}^{n},\,u_{\gamma}^{n}\right)$
be a solution for the problem $\left(\mathcal{O}_{\gamma}^{n}\right).$
Then there exists a Lagrange multiplier vector $\left(p_{\gamma}^{n},\,\lambda_{\gamma}^{n}\right)$
in $\mathcal{V}\times L^{2}\left(\Gamma_{C}\right)$ such that the
following first order optimality system holds 
\begin{align}
Ay_{\gamma}^{n}-Bu_{\gamma}^{n}-\xi_{\gamma}^{n}-\overline{y_{\gamma}}^{n-1}+\overline{\xi_{\gamma}}^{n-1} & =0\text{ in }\mathcal{V}^{\prime},\label{OS-gamma-eq-1}\\
Ap_{\gamma}^{n}+y_{\gamma}^{n}-y_{d}^{n}+\gamma\xi_{\gamma}^{n}-\lambda_{\gamma}^{n} & =0\text{ in }\mathcal{V}^{\prime},\label{OS-gamma-eq-2}\\
y_{\gamma}^{n}+\varepsilon_{\gamma}\xi_{\gamma}^{n}\geq0\qquad\lambda_{\gamma}^{n}\geq0,\qquad\left(y_{\gamma}^{n}+\varepsilon_{\gamma}\xi_{\gamma}^{n},\,\lambda_{\gamma}^{n}\right) & =0,\label{OS-gamma-eq-3}\\
\xi_{\gamma}^{n}\geq0,\qquad\left(\xi_{\gamma}^{n}-\xi_{d}^{n}+2\gamma\varepsilon_{\gamma}\xi_{\gamma}^{n}+\gamma y_{\gamma}^{n}-p_{\gamma}^{n}-\varepsilon_{\gamma}\lambda_{\gamma}^{n},\,\tau-\xi_{\gamma}^{n}\right) & \geq0,\label{OS-gamma-eq-4}\\
u_{\gamma}^{n}\geq0,\qquad\left(\alpha u_{\gamma}^{n}-\tau B^{*}p_{\gamma}^{n},\,v-u_{\gamma}^{n}\right)_{\Gamma_{C}} & \geq0,\label{OS-gamma-eq-5}
\end{align}
where $\tau$ and $v$ are two non-negative arbitrary functions in
$L^{2}\left(\Omega\right)$ and $L^{2}\left(\Gamma_{C}\right)$ respectively.
$B^{*}$ is the adjoint operator of $B.$ 
\end{prop}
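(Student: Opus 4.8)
The plan is to invoke the Lagrange multiplier rule for cone-constrained optimization in Banach spaces (the Zowe--Kurcyusz theorem recalled in Appendix B), whose regularity hypothesis is exactly the constraint qualification $(\mathcal{CQ})$ just verified. Since $J_{\gamma}$ is a quadratic, hence Fréchet differentiable, functional and all the constraint maps are affine and continuous, the theorem applied at the minimizer $\left(y_{\gamma}^{n},\,\xi_{\gamma}^{n},\,u_{\gamma}^{n}\right)$ produces the multipliers asserted in the statement: an adjoint state $p_{\gamma}^{n}\in\mathcal{V}$ attached to the linear state equation (the equality constraint valued in $\mathcal{V}^{\prime}$, whose dual is $\mathcal{V}$ by reflexivity), a nonnegative multiplier $\lambda_{\gamma}^{n}\in L^{2}\left(\Omega\right)$ attached to the coupled pointwise constraint $y^{n}+\varepsilon_{\gamma}\xi^{n}\geq0$, and multipliers in the dual cones of $L^{2}\left(\Omega\right)$ and $L^{2}\left(\Gamma_{C}\right)$ for the simple bounds $\xi^{n}\geq0$ and $u^{n}\geq0$. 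The essential point is that, because $(\mathcal{CQ})$ holds in the $L^{2}$-setting, all these multipliers are genuine $L^{2}$-functions and not merely measures; this is precisely the regularity the proposition claims, and it is what justifies a conforming discretization later.

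I would then form the Lagrangian
\[
\mathcal{L}(y,\xi,u,p,\lambda)=J_{\gamma}(y,\xi,u)+\left\langle Ay-Bu-\xi-\overline{y}^{n-1}+\overline{\xi}^{n-1},\,p\right\rangle -\left(\lambda,\,y+\varepsilon_{\gamma}\xi\right),
\]
and read off stationarity variable by variable. Since $y$ carries no sign constraint of its own, the partial derivative in $y$ must vanish: using the self-adjointness of $A$ and representing the contribution of the tracking term $\tfrac{1}{2}\|y-y_{d}^{n}\|_{H^{1}\left(\Omega\right)}^{2}$ through its Riesz representative, the identity $\partial_{y}\mathcal{L}\cdot h=0$ for all $h\in\mathcal{V}$ collapses to the adjoint equation \eqref{OS-gamma-eq-2}, in which the term $\gamma\xi_{\gamma}^{n}$ stems from differentiating the penalty $\gamma\left(\xi,y\right)$ and the term $-\lambda_{\gamma}^{n}$ from the coupled constraint. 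The sign conditions and complementarity slackness carried by $\lambda_{\gamma}^{n}$ yield exactly \eqref{OS-gamma-eq-3}, while the primal equation \eqref{OS-gamma-eq-1} is nothing but feasibility of the minimizer.

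For the two remaining variables the bounds are active, so stationarity takes the form of variational inequalities on the corresponding cones. Differentiating $\mathcal{L}$ in $\xi$ picks up $\xi-\xi_{d}^{n}$ from the tracking term, $\gamma y+2\gamma\varepsilon_{\gamma}\xi$ from the penalty, $-p$ from the state equation and $-\varepsilon_{\gamma}\lambda$ from the coupled constraint; testing against $\tau-\xi_{\gamma}^{n}$ for an arbitrary $\tau\geq0$ gives \eqref{OS-gamma-eq-4}. Differentiating in $u$ and using the adjoint $B^{*}$ of the control operator produces \eqref{OS-gamma-eq-5}. Equivalently, these two variational inequalities are the complementarity conditions one obtains after eliminating the dual-cone multipliers of the simple bounds furnished by the theorem.

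The differentiation itself is routine; the step demanding the most care is the bookkeeping around the coupled constraint $y^{n}+\varepsilon_{\gamma}\xi^{n}\geq0$, whose single multiplier $\lambda_{\gamma}^{n}$ has to appear consistently in both the $y$-equation \eqref{OS-gamma-eq-2} and the $\xi$-inequality \eqref{OS-gamma-eq-4} (with the extra factor $\varepsilon_{\gamma}$ in the latter), together with the correct identification of the adjoints $A^{*}=A$ and $B^{*}$ and of the Riesz representative of the $H^{1}$-tracking term, so that every sign and coefficient matches. The genuinely nontrivial ingredient, namely existence of the multipliers as regular $L^{2}$-functions, is already secured by the verification of $(\mathcal{CQ})$, so no separate approximation or regularization argument is needed at this stage.
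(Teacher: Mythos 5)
Your proposal is correct and follows essentially the same route as the paper: the paper's entire argument for this proposition is the verification of the Zowe--Kurcyusz constraint qualification $(\mathcal{CQ})$ immediately preceding it, followed by an (unwritten) application of the theorem in Appendix B and the routine Lagrangian bookkeeping that you spell out explicitly, including the correct observation that $\lambda_{\gamma}^{n}$ is the $L^{2}(\Omega)$ multiplier of the coupled constraint appearing with the factor $\varepsilon_{\gamma}$ in the $\xi$-inequality. Your identification of the multiplier spaces via $\mathcal{V}''=\mathcal{V}$ and the Riesz representation of the $H^{1}$-tracking term matches the intended reading of the optimality system.
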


\begin{rem}
Conditions \eqref{OS-gamma-eq-4} and \eqref{OS-gamma-eq-4} correspond
to the projection of $\dfrac{1}{1+2\gamma\varepsilon_{\gamma}}\left(\xi_{d}^{n}+\lambda_{\gamma}^{n}\right)$
and $\dfrac{\tau}{\alpha}B^{*}p_{\gamma}^{n}$ over the non-negative
cones in $L^{2}\left(\Omega\right)$ and $L^{2}\left(\Gamma_{C}\right)$
respectively: 
\[
u_{\gamma}^{n}=\max\left(0,\,\dfrac{\tau}{\alpha}B^{*}p_{\gamma}^{n}\right),\qquad\xi_{\gamma}^{n}=\max\left(0,\,\dfrac{1}{1+2\gamma\varepsilon_{\gamma}}\left(p_{\gamma}^{n}+\xi_{d}^{n}+\varepsilon_{\gamma}\lambda_{\gamma}^{n}-\gamma y_{\gamma}^{n}\right)\right).
\]
\end{rem}

\section{Numerical experiments}

In this section we present two preliminary numerical experiments to
assess the validity of the above developed theoretical procedure.
At each time step $t^{n}=n\tau$ we solve a discrete version of the
optimization problem $\left(\mathcal{O}^{n}\right)_{\gamma_{k}}$
for a sequence of penalty parameters $\left(\gamma_{k}\right)_{k\in\mathbb{N}}$
with $\gamma_{k}=10^{-3}\times1.5^{k}$ and $k=1,\dots40.$ We select
a regularization parameter $\varepsilon_{\gamma_{k}}=\dfrac{1}{10^{3}+\gamma_{k}^{4}}.$
The parameter for the cost of the control is taken $\nu=10^{-4}.$
All functions are discretized by continuous piecewise linear finite
elements. The fully discretized penalized-regularized control problems
corresponding to $\left(\mathcal{O}^{n}\right)_{\gamma_{k}}$ are
then solved numerically using the fmincon Matlab function.

\subsection*{Example 1}

We consider a one dimensional free convection problem with a known
analytical solution \cite{Esen}: 
\[
y_{ex}\left(x,t\right)=\begin{cases}
\exp\left(t-x\right)-1 & \text{ if }0\leq x\leq t,\\
0 & \text{ if }t\leq x\leq x_{max},
\end{cases}\;\;\xi_{ex}\left(x,t\right)=\begin{cases}
0 & \text{ if }0\leq x\leq t,\\
1 & \text{ if }t\leq x\leq x_{max}.
\end{cases}
\]
Our aim here is to apply a heat flux on the left boundary, $\Gamma_{C}=\{0\},$
to get temperature and solid fraction profiles as close as possible
to the exact solution. For the instantaneous boundary control problem
we choose a fixed time step $\tau=0.01$ and we set 
\[
y_{d}^{n}=y_{ex}\left(x,\,n\tau\right),\qquad\xi_{d}^{n}=\xi_{ex}\left(x,\,n\tau\right)\qquad\qquad\text{ for }n=1,\dots N=300.
\]
For the computational domain we choose a uniform grid of size $h=0.01$
with $x_{max}=4.$ The analytical control $u_{ex}\left(t\right)=\exp(t)$
is very well reconstructed up to the first few iterations as shown
in Fig.~\ref{fig-ex1-control}. An excellent agreement has been found
between the analytical and controlled temperature profiles as depicted
in Fig.~\ref{fig-ex1-y}. The complementarity condition between the
temperature and solid fraction are respected, as shown in Fig.~\ref{fig-ex1-comp}
for the sample instant $t=3$, which emphasize the relevance of the
developed regularization-penalization approach.

\begin{figure}
\begin{centering}
\includegraphics[width=12cm,height=4cm]{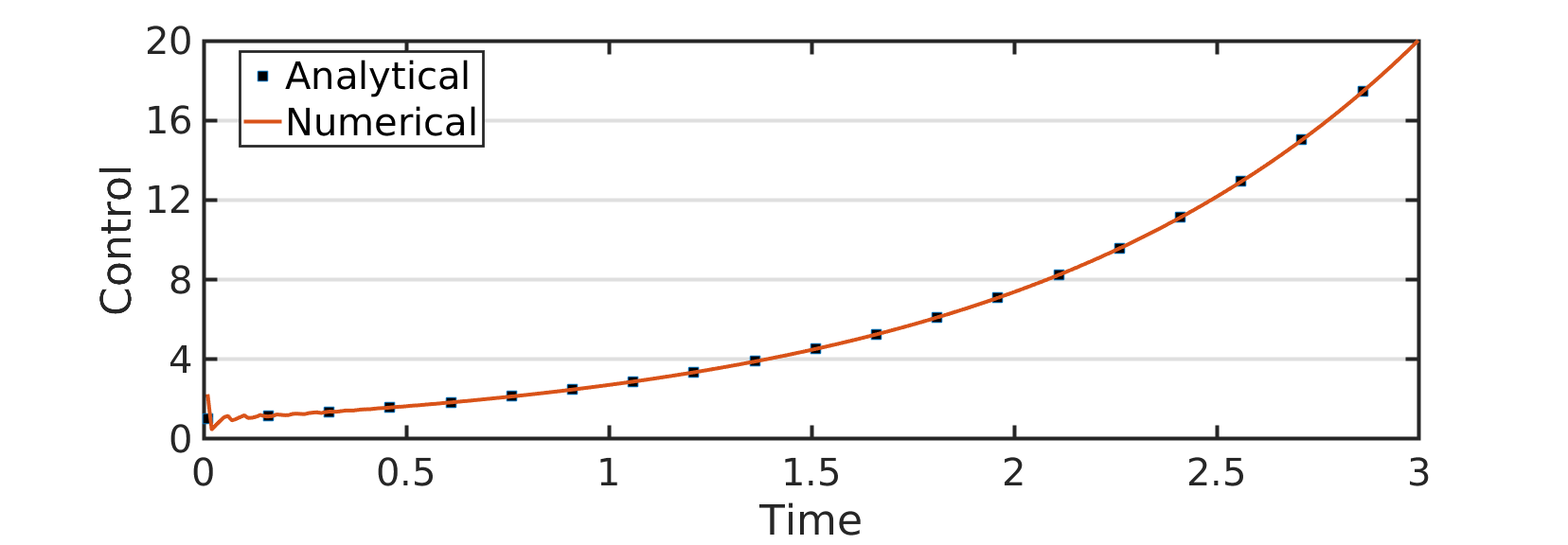} 
\par\end{centering}
\caption{\label{fig-ex1-control}Analytical and computed sub-optimal controls}
\end{figure}

\begin{figure}
\begin{centering}
\includegraphics[width=12cm,height=6cm]{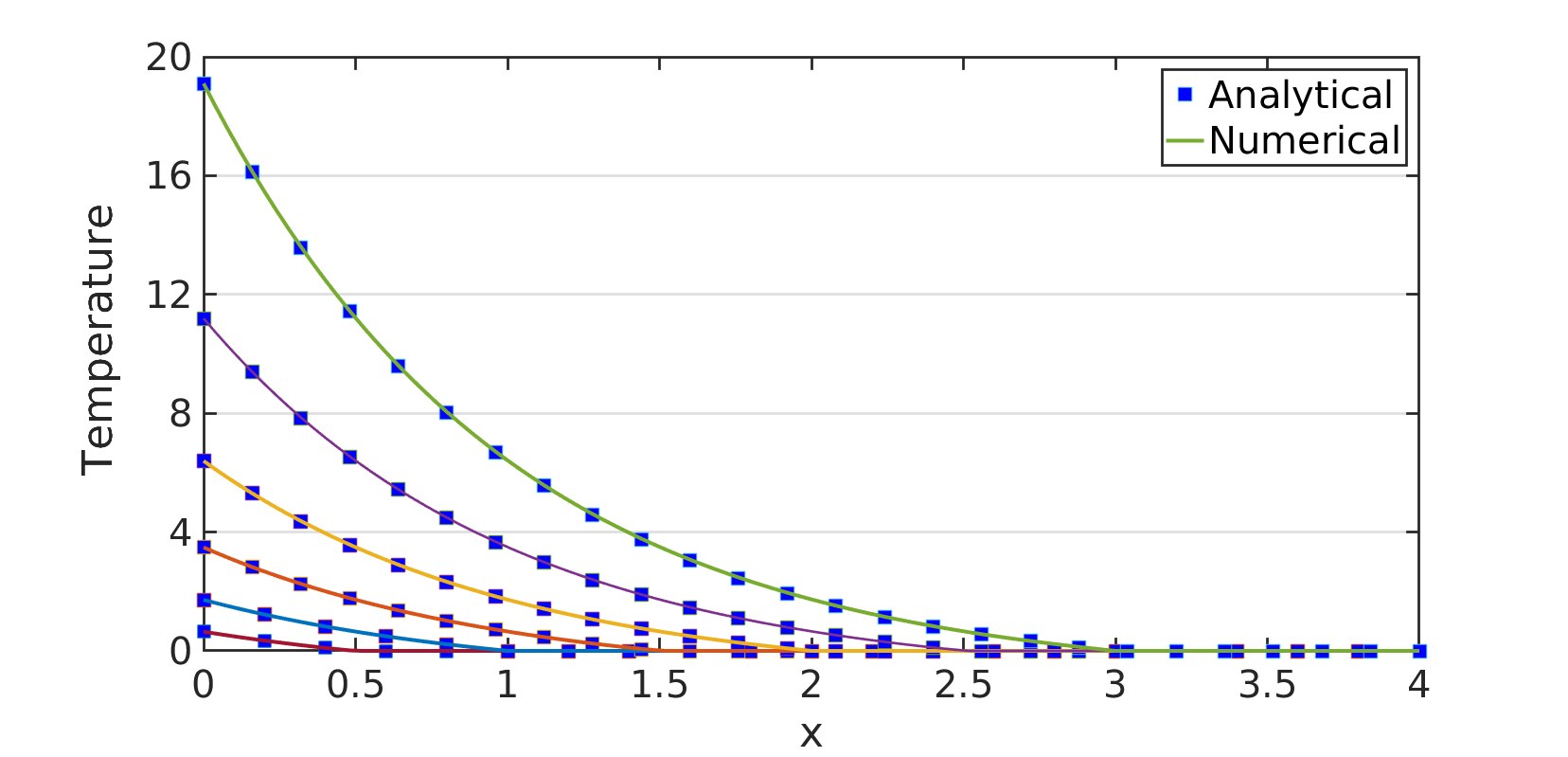} 
\par\end{centering}
\caption{\label{fig-ex1-y}Analytical and computed temperature profiles at
different instants}
\end{figure}

\begin{figure}
\begin{centering}
\includegraphics[width=12cm,height=6cm]{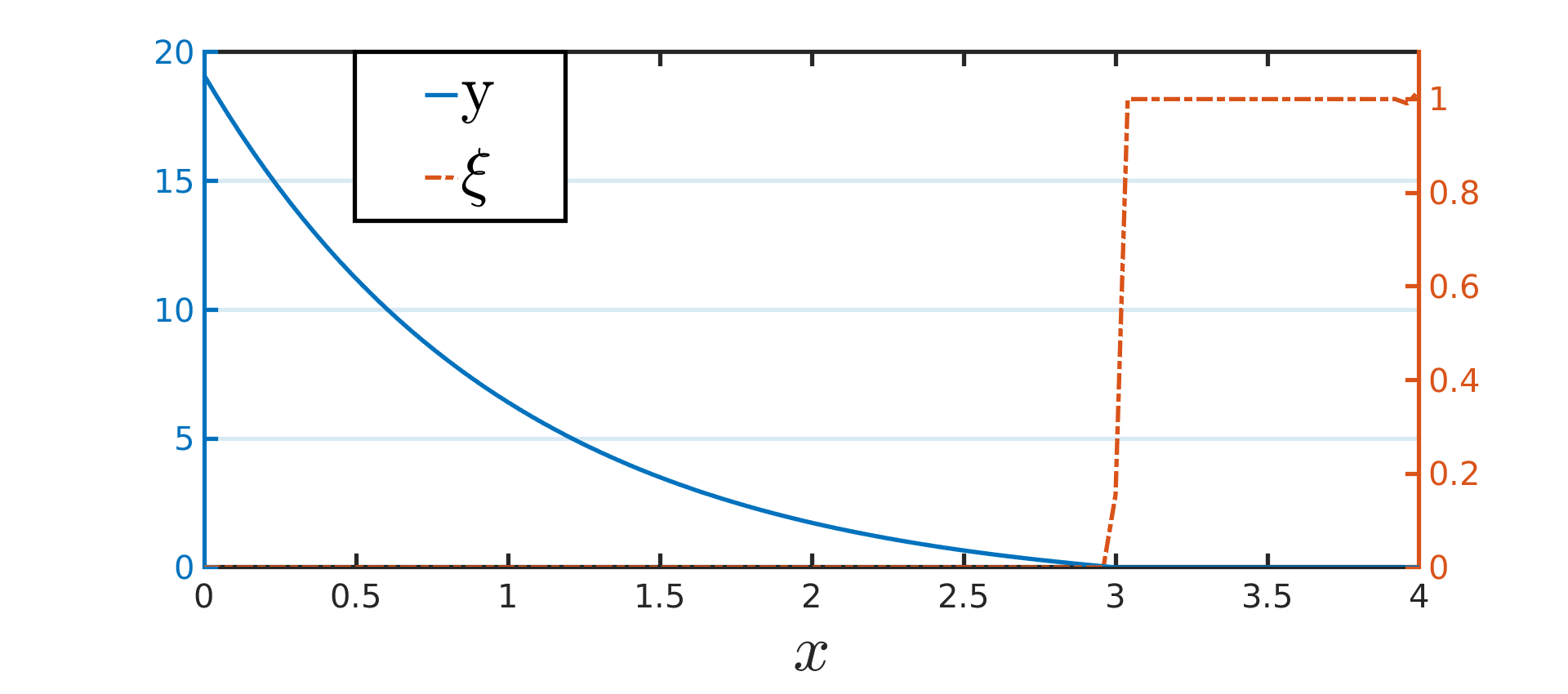} 
\par\end{centering}
\caption{\label{fig-ex1-comp}Computed temperature and solid fraction profiles
at time $t=t_{f}=3$}
\end{figure}

\subsection*{Example 2}

Here we consider a two dimensional problem where the computational
domain $\Omega=\left]0,\,2\right[\times\left]0,\,4\right[$ is discretized
using a $50\times100$ uniform grid. The time step is taken $\tau=0.1$
and a constant convection velocity $\overrightarrow{\bm{v}}=\left(\begin{array}{c}
-0.5\\
0
\end{array}\right)$ is used. No-heat flux condition is applied on the right boundary
and a temperature $y=0$ is held at the top and bottom. We apply the
heat flux control on $\Gamma_{C}:=\left\{ x\in\Omega\,:\,x_{1}=0\right\} $
to govern the system toward the following time-independent desired
state 
\[
y_{d}^{n}\left(x\right)=y_{d}\left(x\right)=\begin{cases}
\exp\left(\frac{1}{4}\left(4-x_{2}\right)x_{2}-x_{1}\right)-1 & \text{ if }x_{1}\leq\frac{1}{4}\left(4-x_{2}\right)x_{2},\\
0 & \text{ if }x_{1}\geq\frac{1}{4}\left(4-x_{2}\right)x_{2},
\end{cases}
\]
\[
\xi_{d}^{n}\left(x\right)=\xi_{d}\left(x\right)=\begin{cases}
0 & \text{ if }x_{1}<\frac{1}{4}\left(4-x_{2}\right)x_{2},\\
1 & \text{ if }x_{1}\geq\frac{1}{4}\left(4-x_{2}\right)x_{2}.
\end{cases}
\]
Figs.~\ref{fig-ex2-y}-\ref{fig-ex2-xi} show the evolution of temperature
$y$ and solid fraction $\xi$ driven by the sub-optimal controls
towards the desired state. A fairly good approximation is obtained.
The significant reduction of the cost functional value is achieved
during the first five time steps and almost stagnates up to $t\approx1$
as shown in Fig.~\ref{fig-ex2-J}. In Fig.~\ref{fig-ex2-u} we present
the computed sub-optimal control at sample instances. We observe a
strong control at the first time step getting inactive near the boundaries.
The controls shape is consistent with the desired state one.

\begin{figure}
\begin{centering}
\includegraphics[width=4cm,height=6cm]{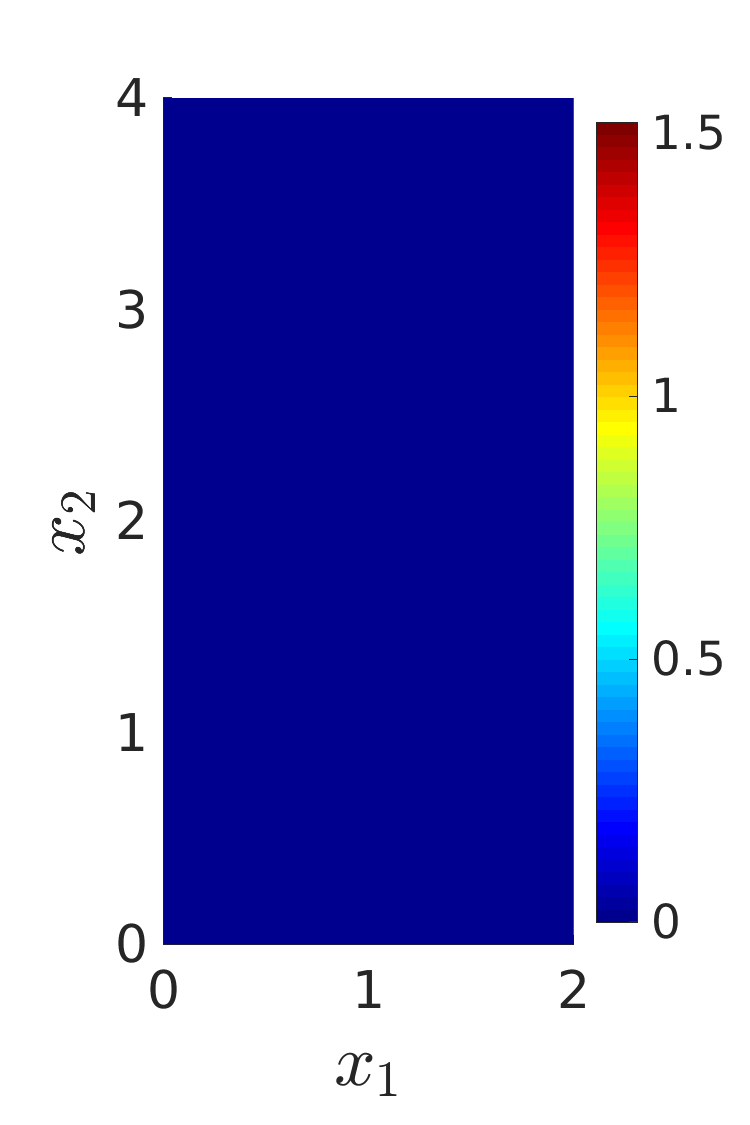}\includegraphics[width=4cm,height=6cm]{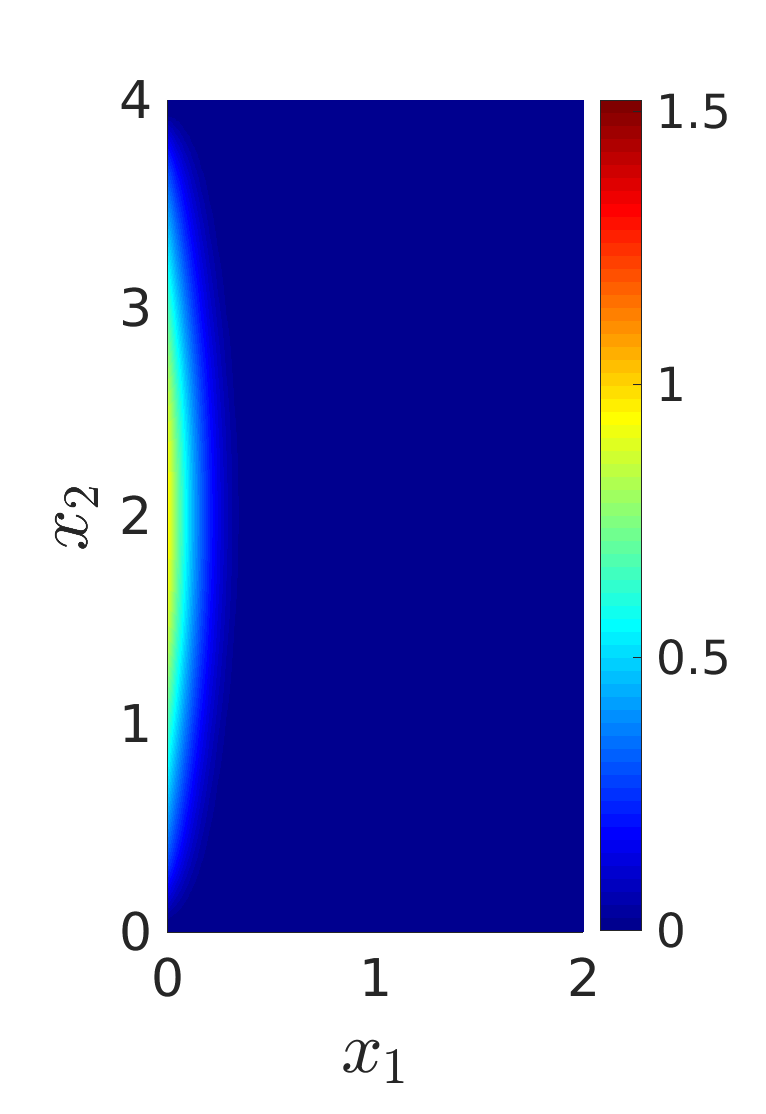}\includegraphics[width=4cm,height=6cm]{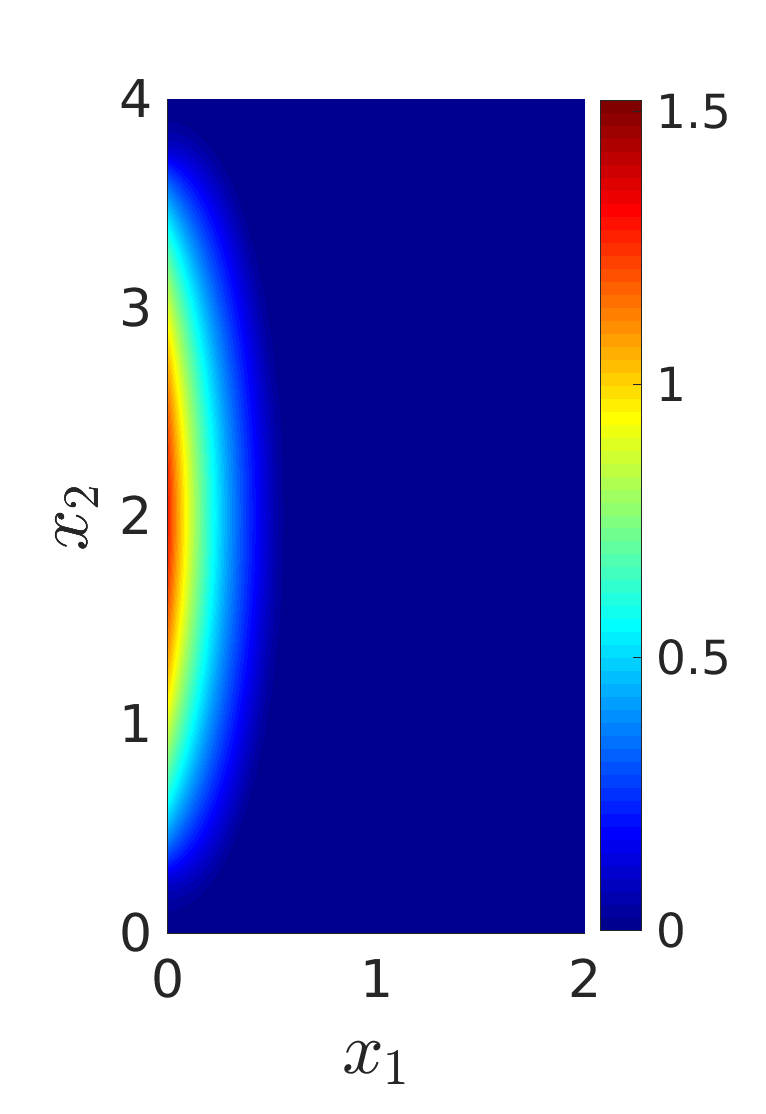}\\
 \includegraphics[width=4cm,height=6cm]{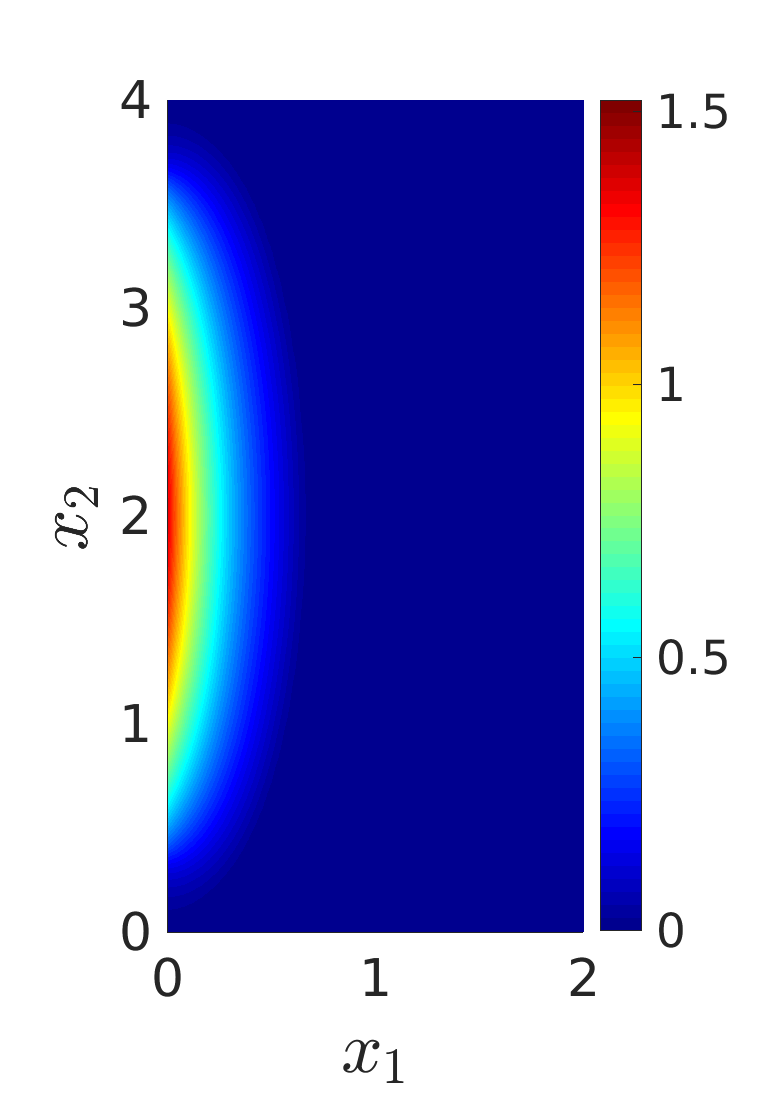}\includegraphics[width=4cm,height=6cm]{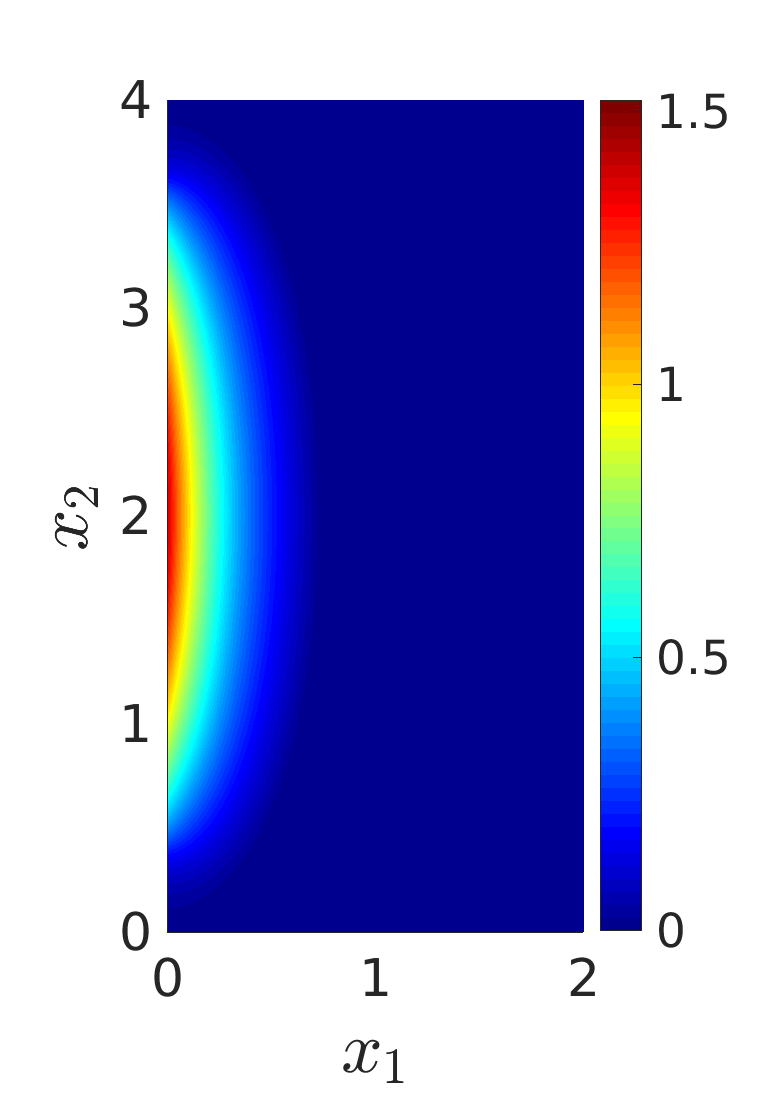}\includegraphics[width=4cm,height=6cm]{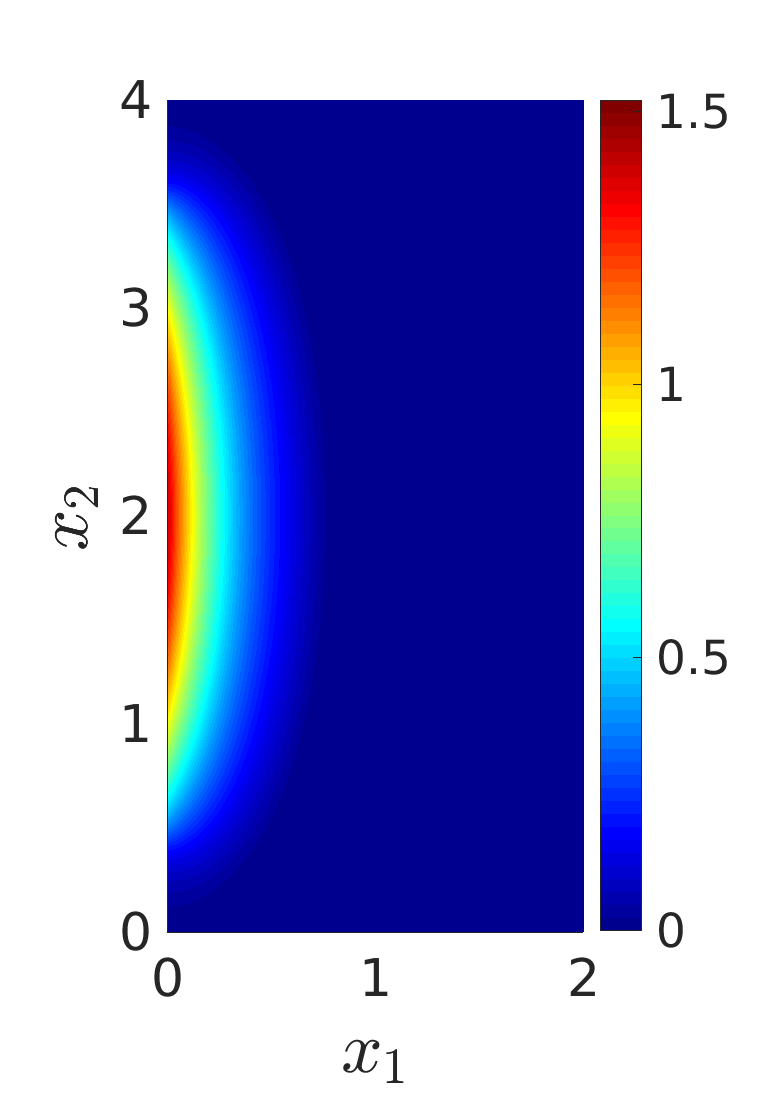}\\
 \includegraphics[width=4cm,height=6cm]{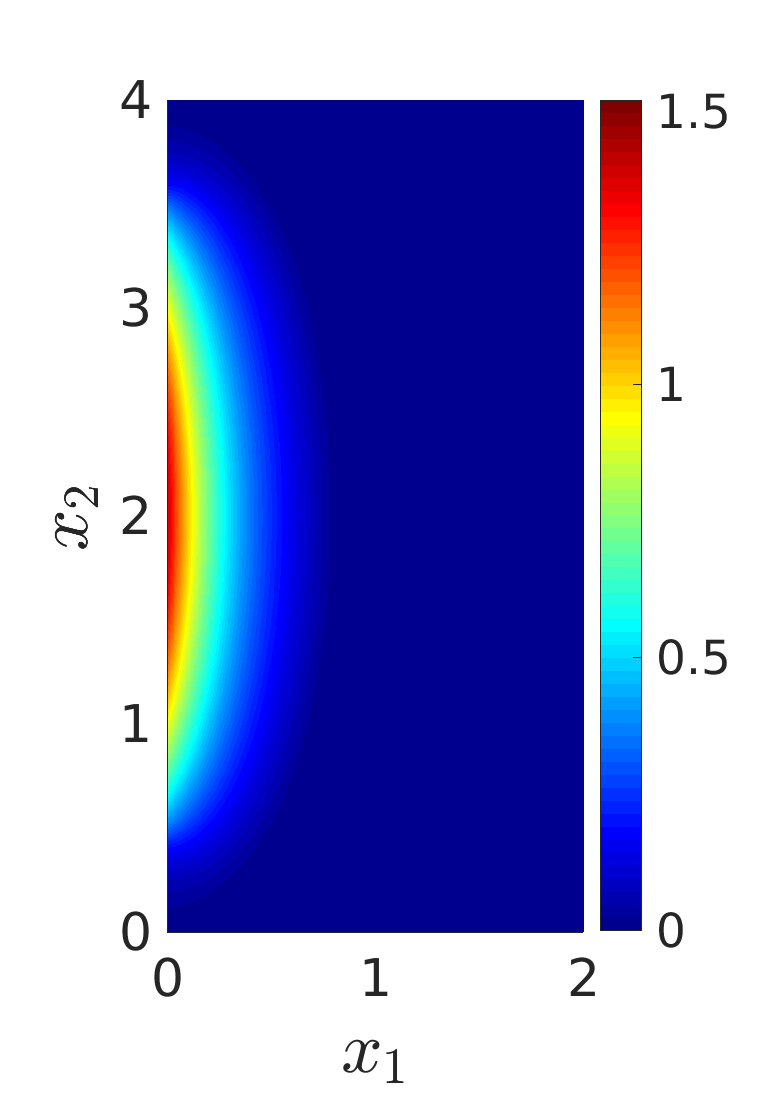}\includegraphics[width=4cm,height=6cm]{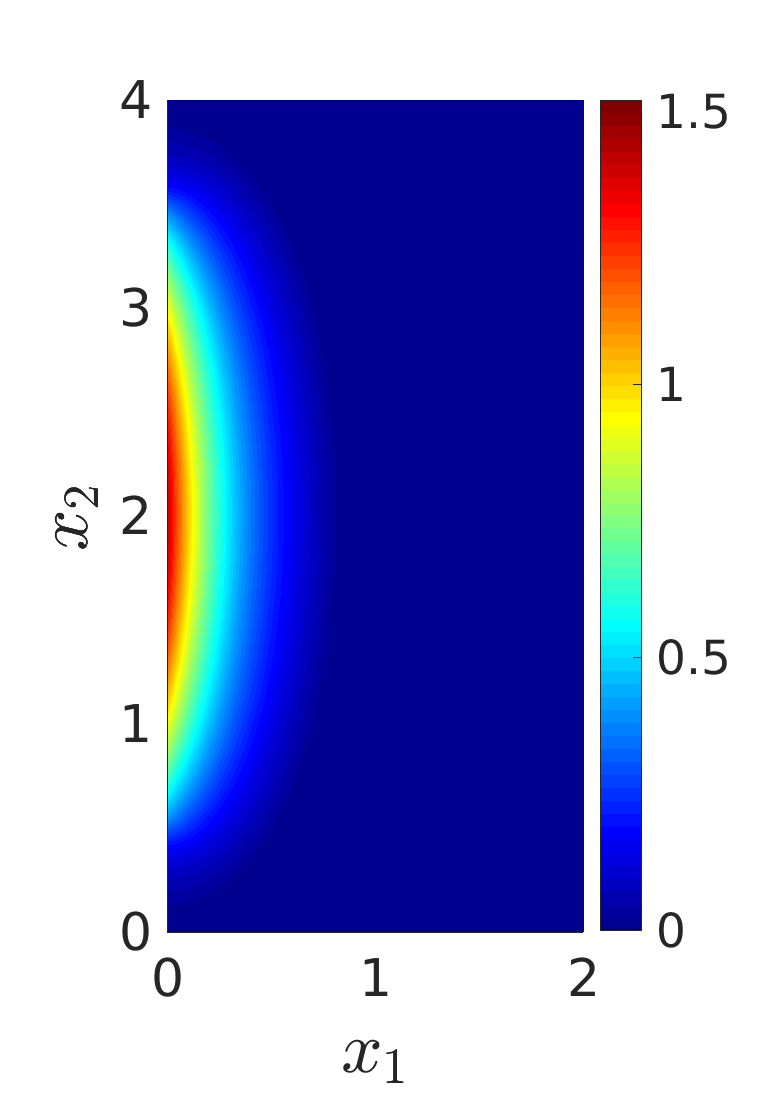}\includegraphics[width=4cm,height=6cm]{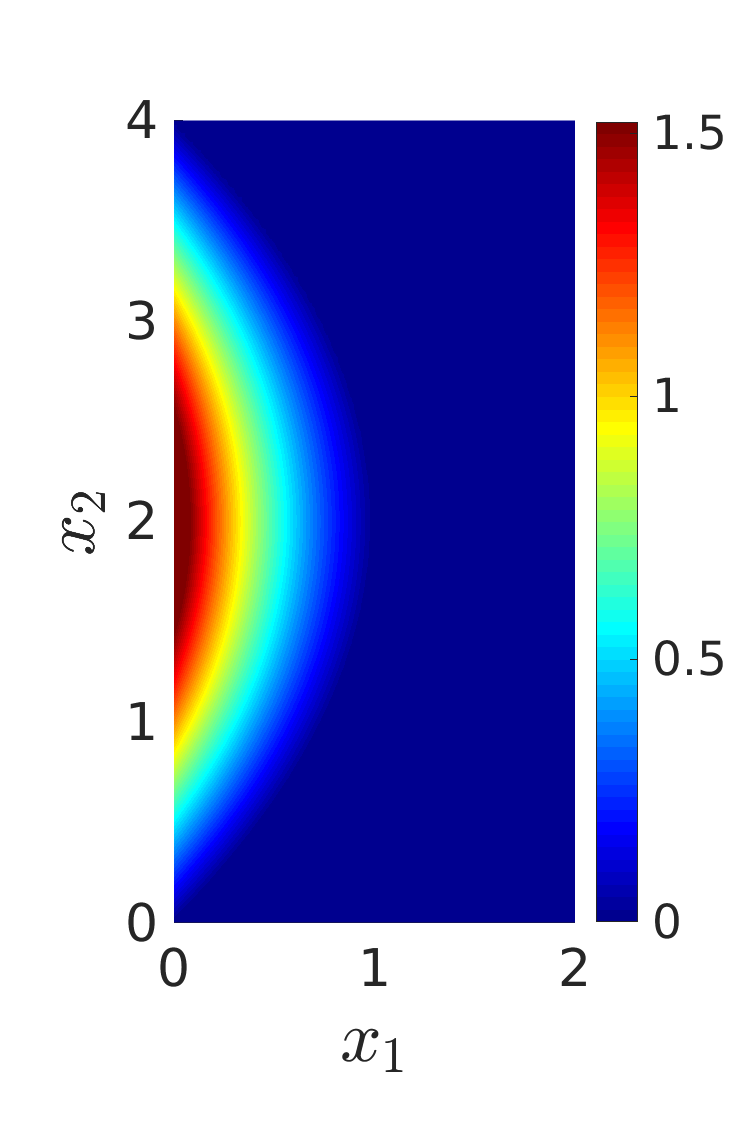} 
\par\end{centering}
\caption{\label{fig-ex2-y}Computed temperatures at $t=0,\,0.1,\,0.3,\,0.5,\,0.7,\,0.9,\,1.1,\,1.4$
and the desired temperature profile}
\end{figure}

\begin{figure}
\begin{centering}
\includegraphics[width=4cm,height=6cm]{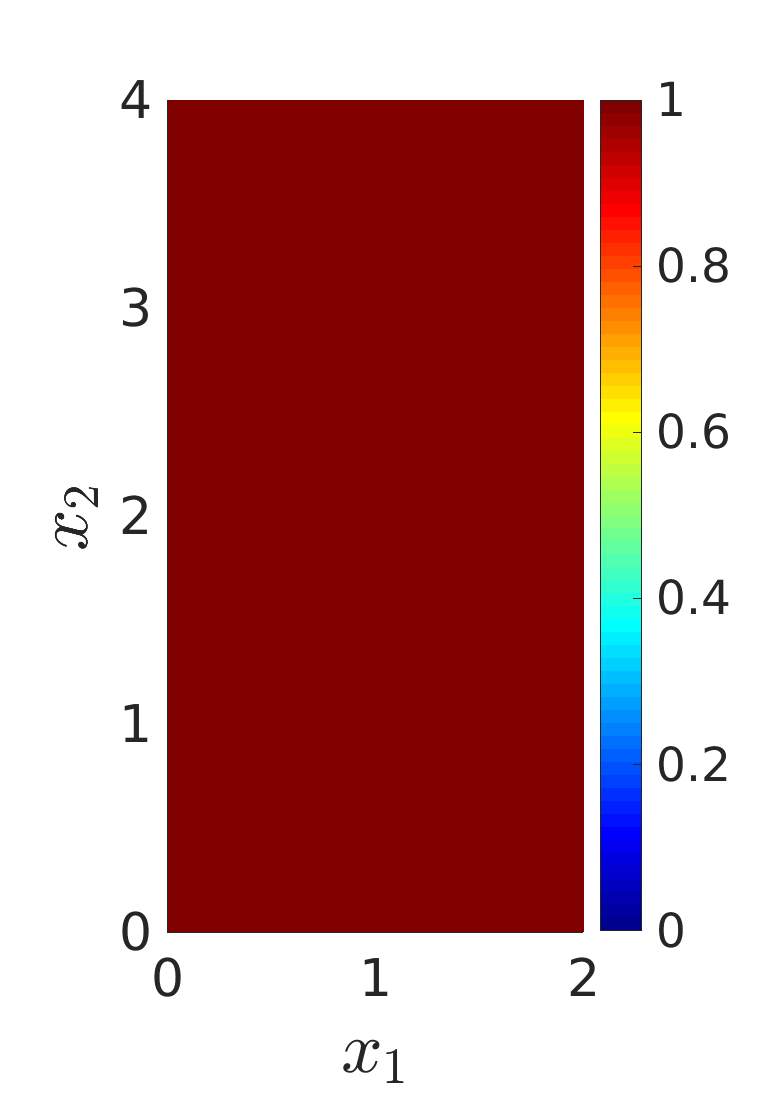}\includegraphics[width=4cm,height=6cm]{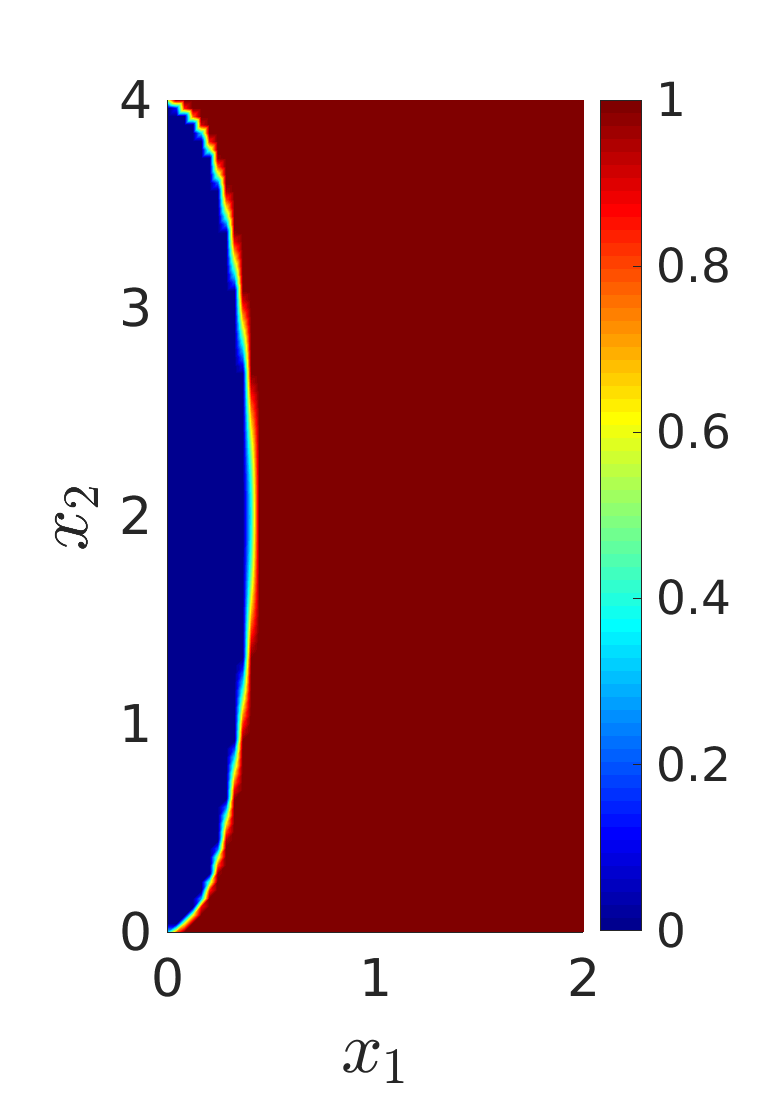}\includegraphics[width=4cm,height=6cm]{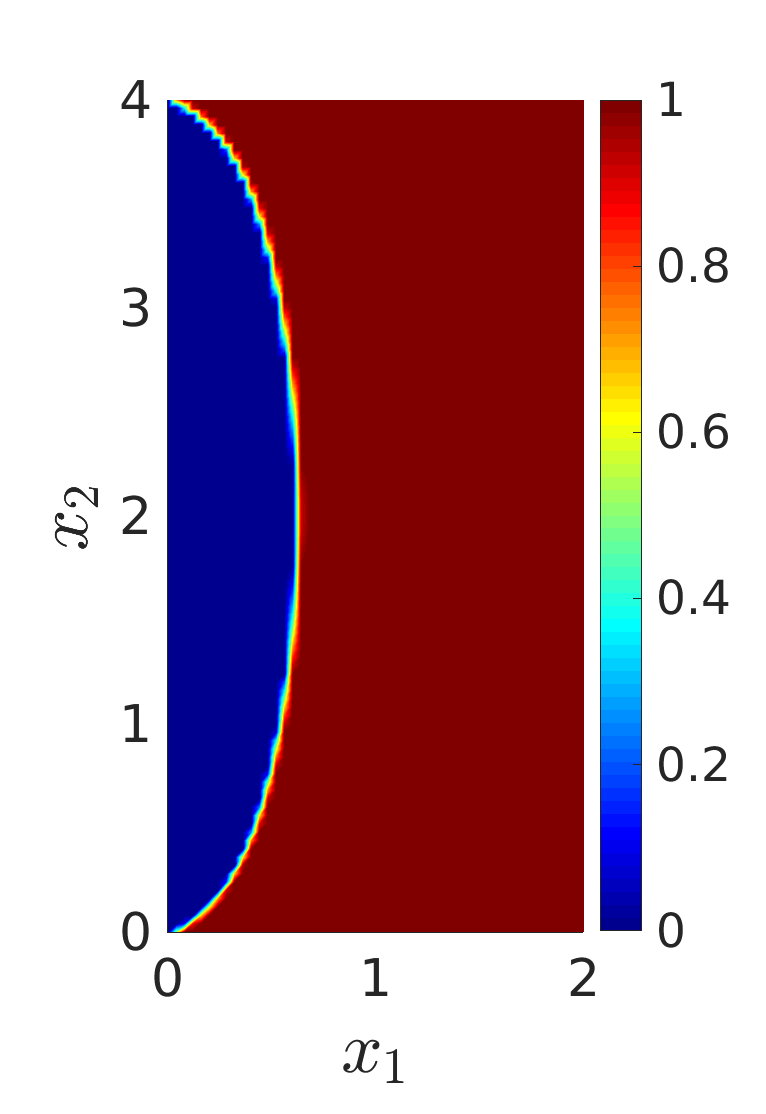}\\
 \includegraphics[width=4cm,height=6cm]{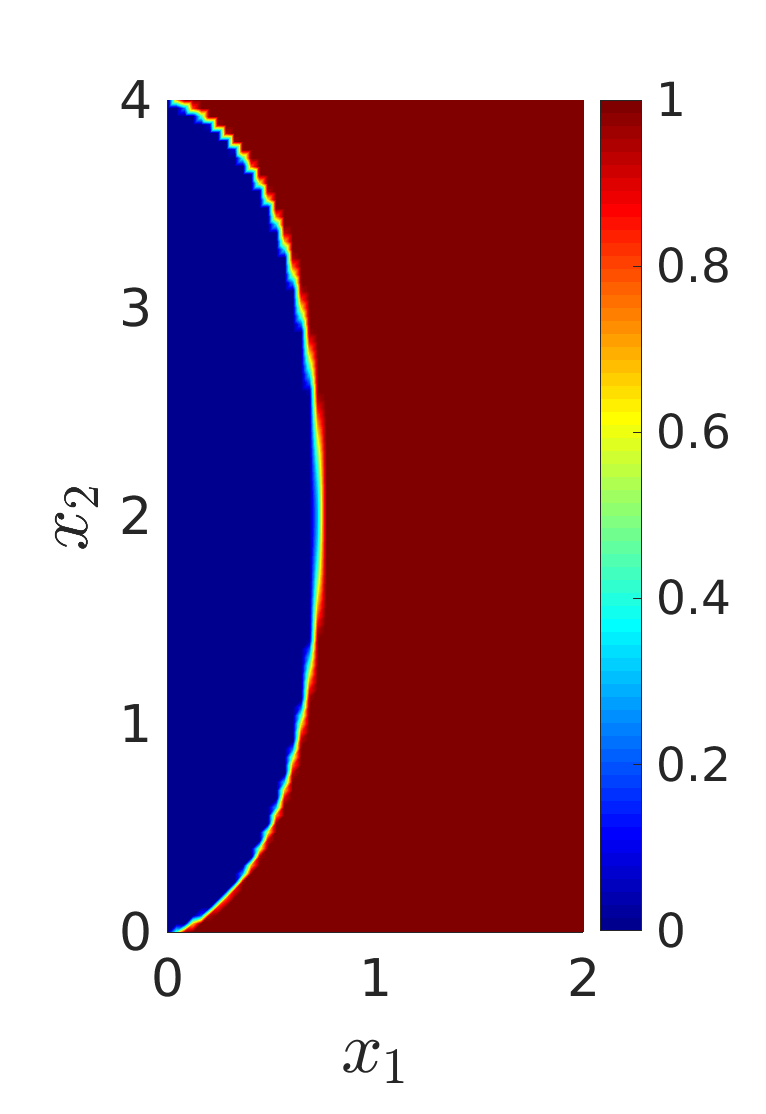}\includegraphics[width=4cm,height=6cm]{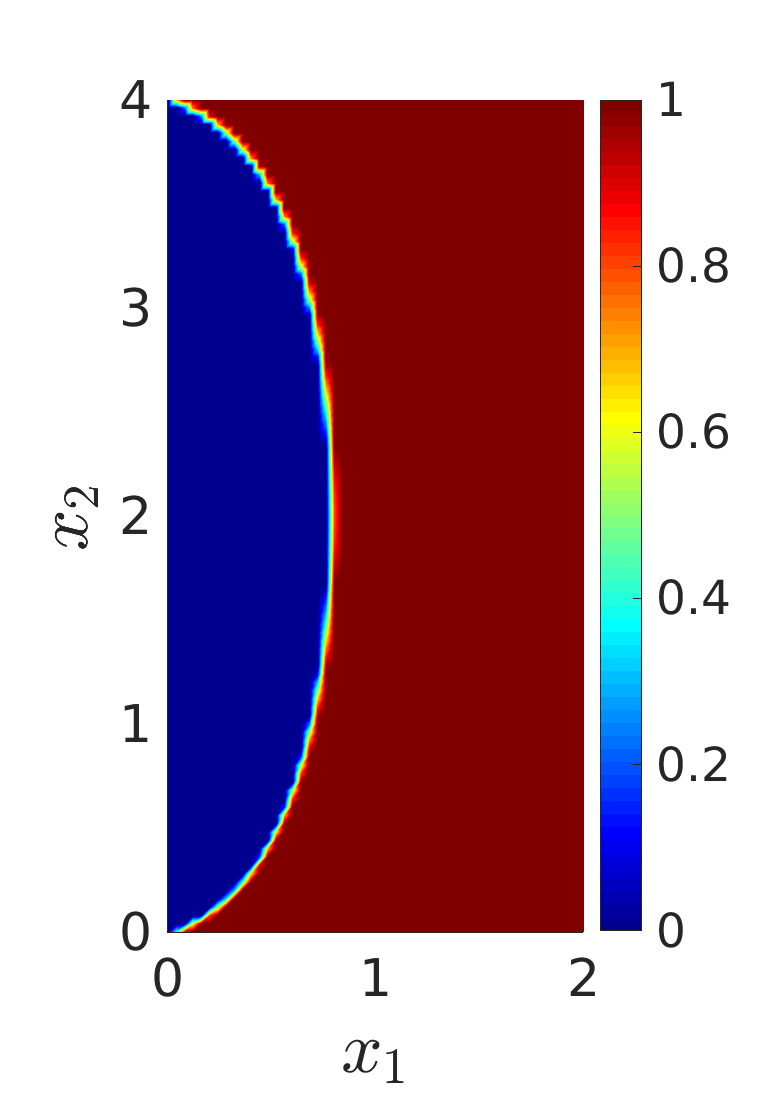}\includegraphics[width=4cm,height=6cm]{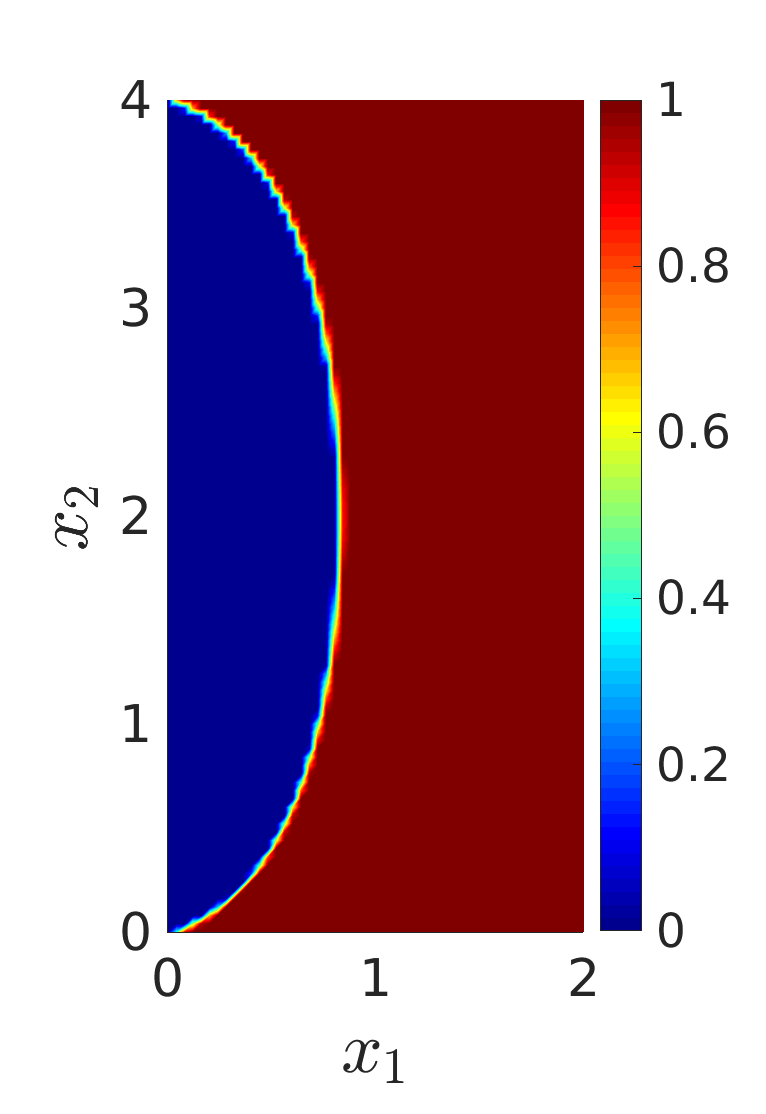}\\
 \includegraphics[width=4cm,height=6cm]{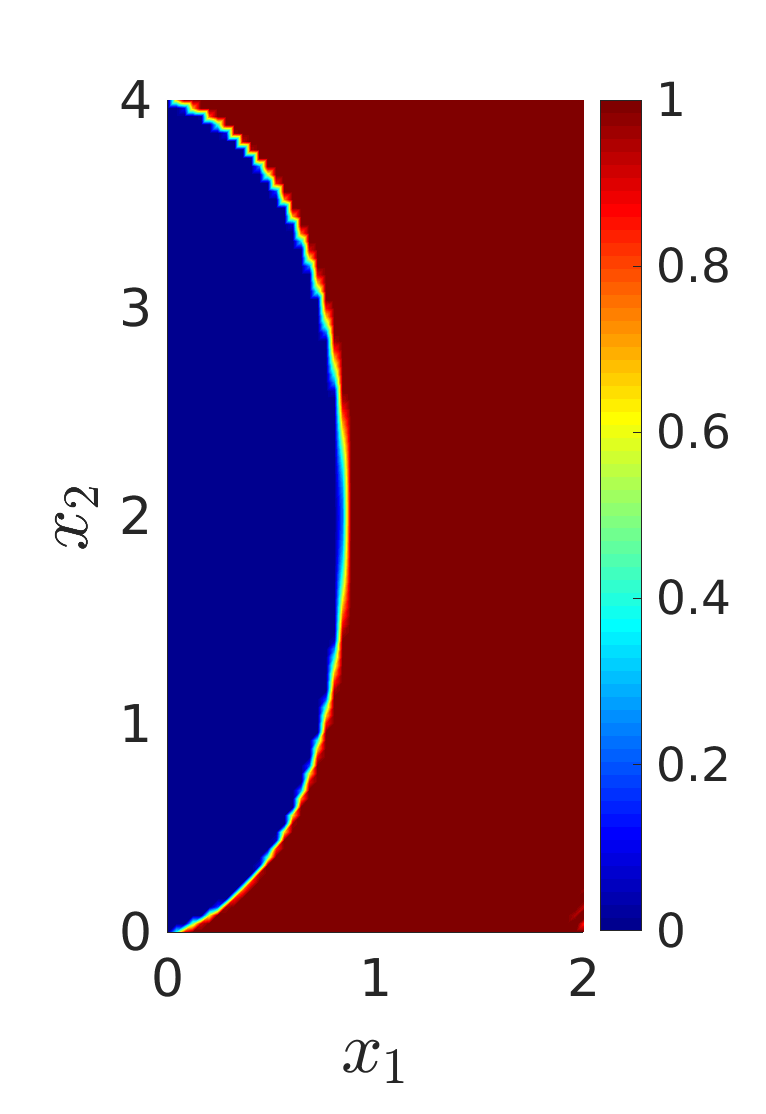}\includegraphics[width=4cm,height=6cm]{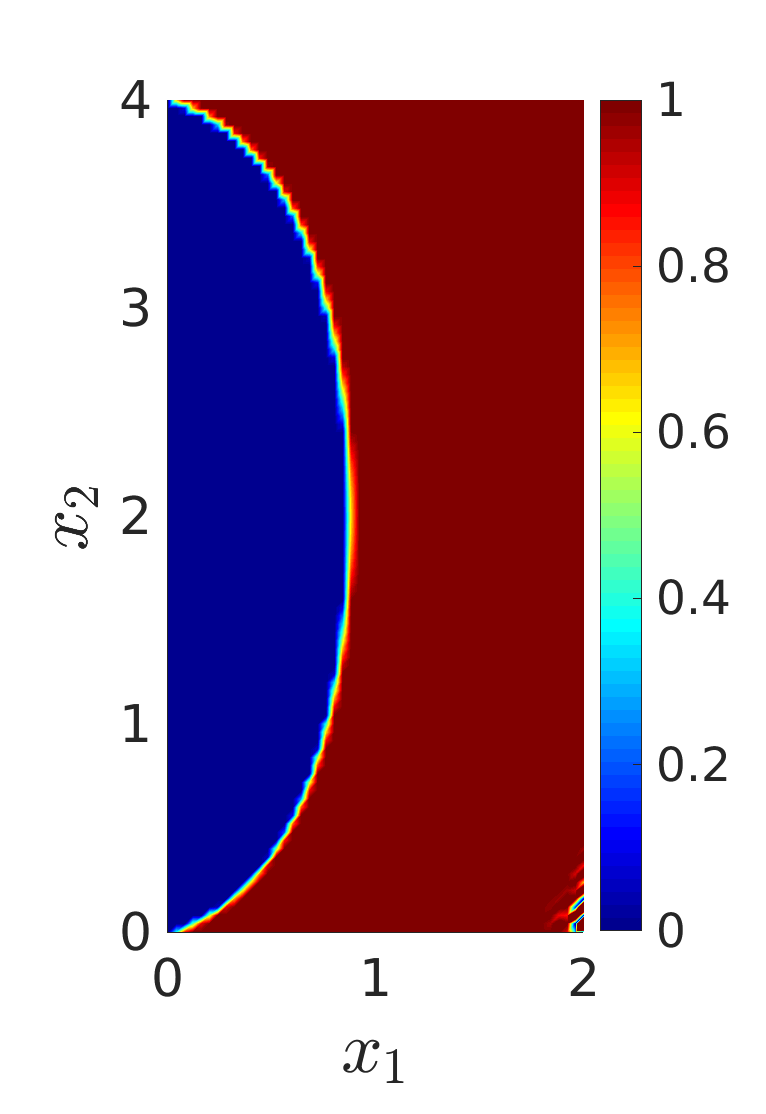}\includegraphics[width=4cm,height=6cm]{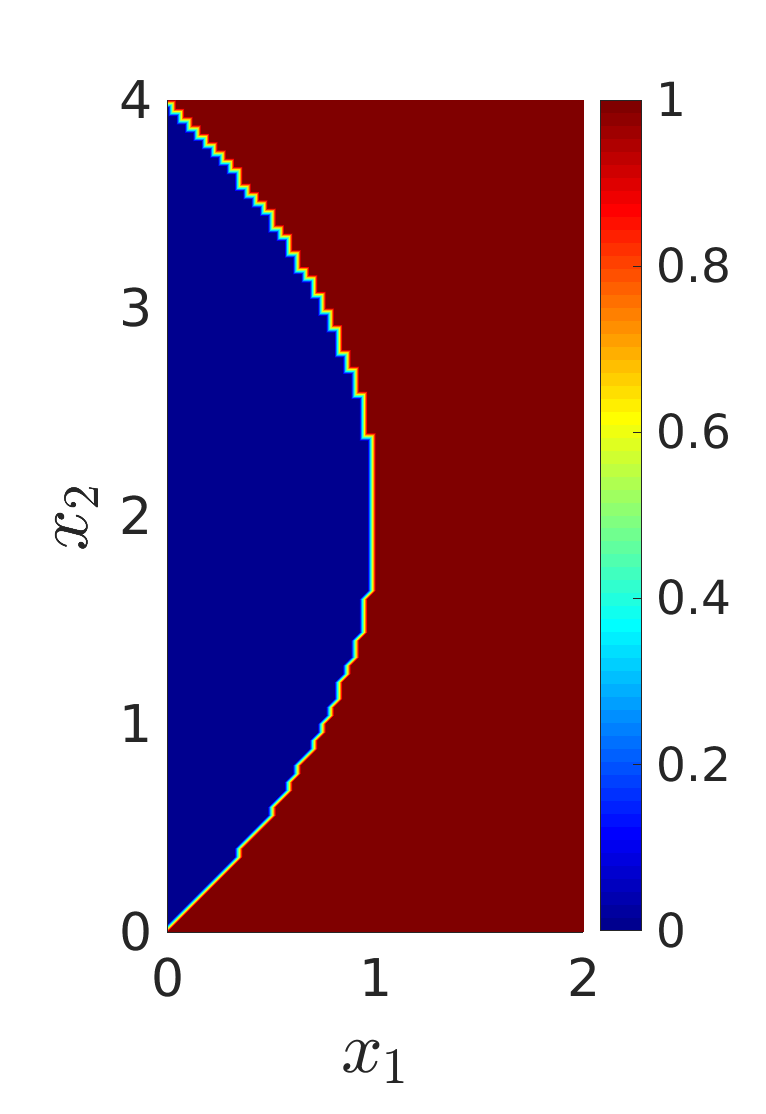} 
\par\end{centering}
\caption{\label{fig-ex2-xi}Computed solid fraction at $t=0,\,0.1,\,0.3,\,0.5,\,0.7,\,0.9,\,1.1,\,1.4$
and the desired solid fraction profile}
\end{figure}

\begin{figure}
\begin{centering}
\includegraphics[width=8cm,height=5cm]{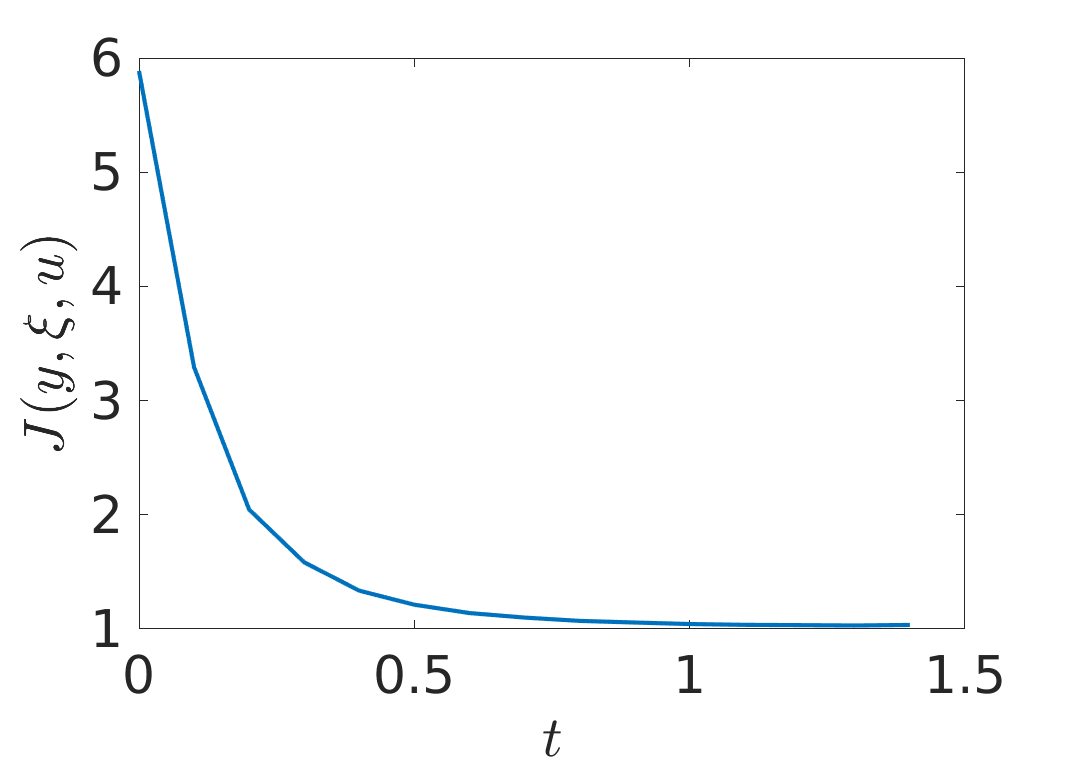} 
\par\end{centering}
\caption{\label{fig-ex2-J}Reduction of the cost functional}
\end{figure}

\begin{figure}
\begin{centering}
\includegraphics[width=10cm,height=5cm]{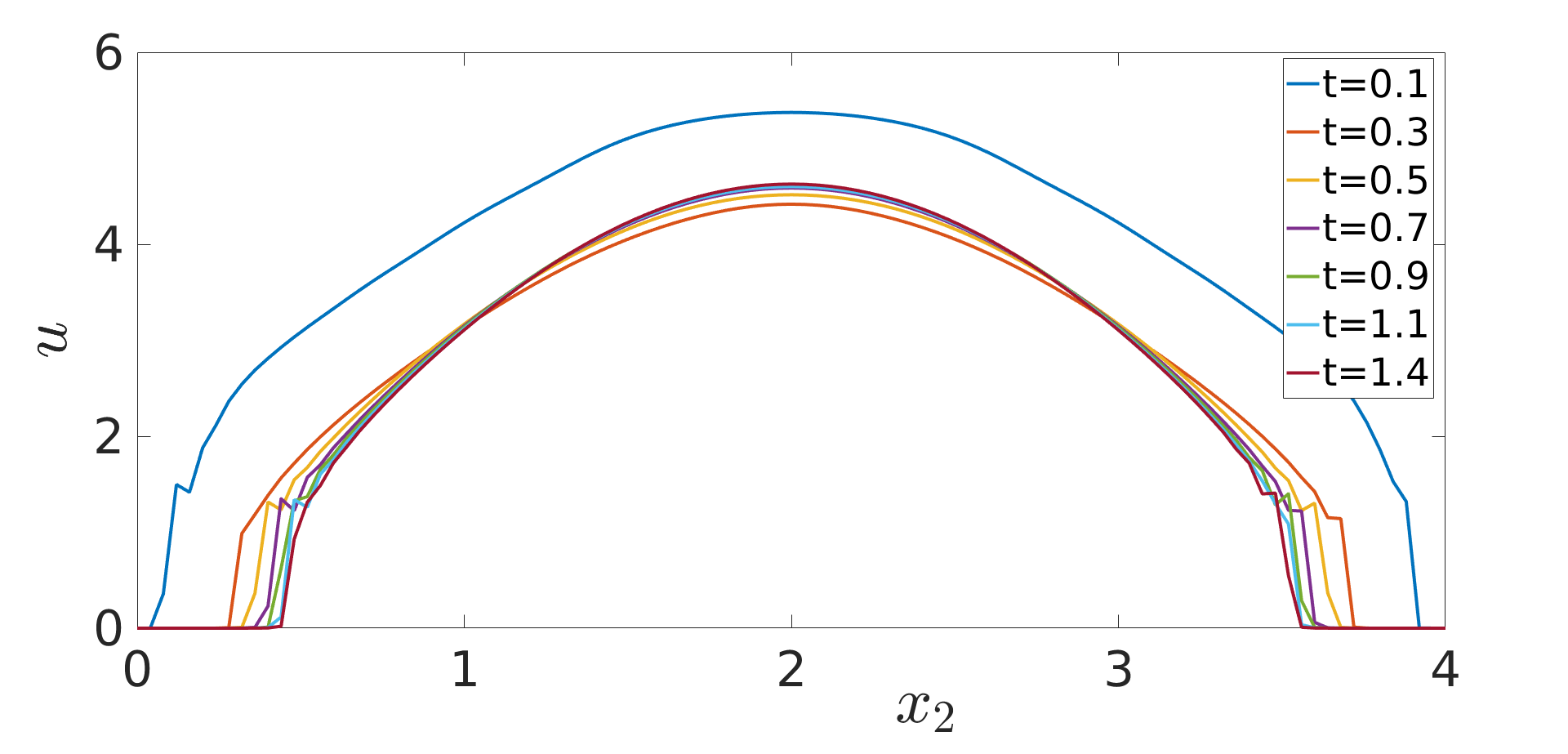} 
\par\end{centering}
\caption{\label{fig-ex2-u}Computed sub-optimal controls at different time
steps}
\end{figure}

\begin{rem}
To apply the developed approach on more realistic benchmarks, a coupling
with momentum and mass conservation equations is required. However,
many discretization and algorithmic aspects have to be developed first.
Questions related to adaptive mesh refinement, selection of the optimization
parameters, solution algorithm and preconditioning will be addressed
in a forthcoming study. 
\end{rem}

\section*{Appendix A: proof of Theorem \ref{thm-1}}

To show that the problem 
\[
\tag{\ensuremath{\mathcal{WF}^{n}}}\left\{ \begin{array}{ll}
Ay^{n}=\xi^{n}+Bu^{n}+\overline{y}^{n-1}-\overline{\xi}^{n-1} & \text{in }\mathcal{V}^{\prime},\\
\xi^{n}\in\mathcal{H}(y^{n}), & \text{a.e. in }\Omega
\end{array}\right.
\]
has a solution, let $\mathcal{H}_{\varepsilon}$ be a regularization
of the Heaviside operator $\mathcal{H}$ given by 
\[
\mathcal{H}_{\varepsilon}(x)=\left\{ \begin{array}{ll}
0 & \quad\textrm{ if }x\geq\varepsilon,\\
1-x/\varepsilon & \quad\textrm{ if }0\leq x\leq\varepsilon,\\
1 & \quad\textrm{ if }x\leq0.
\end{array}\right.
\]

Correspondingly, we consider the following regularized problem 
\[
\tag{\ensuremath{\mathcal{WF}_{\varepsilon}^{n}}}\left\{ \begin{array}{rcl}
\text{Find }y_{\varepsilon}^{n}\in\mathcal{V}\text{ such that }\\
y_{\varepsilon}^{n}\geq0 & \text{a.e in }\Omega,\\
Ay_{\varepsilon}^{n}=\mathcal{H}_{\varepsilon}(y_{\varepsilon}^{n})+Bu^{n}+\overline{y}^{n-1}-\overline{\xi}^{n-1}\qquad & \text{in }\mathcal{V}^{\prime}.
\end{array}\right.
\]

\begin{lem}
The regularized problem $\left(\mathcal{WF}_{\varepsilon}^{n}\right)$
has a unique solution $y_{\varepsilon}^{n}.$ Moreover there exists
a constant $C$ not depending on $\varepsilon$ such that 
\begin{equation}
\|y_{\varepsilon}^{n}\|_{H^{1}\left(\Omega\right)}\leq C.\label{eq-Reg-Sol-Est}
\end{equation}
\end{lem}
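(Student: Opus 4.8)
The plan is to recast $\left(\mathcal{WF}_{\varepsilon}^{n}\right)$ as a single monotone operator equation on $\mathcal{V}$ and to recover the sign constraint $y_{\varepsilon}^{n}\geq0$ afterwards. First I would define $\mathcal{A}:\mathcal{V}\to\mathcal{V}'$ by $\left\langle \mathcal{A}(w),\,\phi\right\rangle :=\left\langle Aw,\,\phi\right\rangle -\left(\mathcal{H}_{\varepsilon}(w),\,\phi\right)$ and set $F:=Bu^{n}+\overline{y}^{n-1}-\overline{\xi}^{n-1}\in\mathcal{V}'$, so that the equation in $\left(\mathcal{WF}_{\varepsilon}^{n}\right)$ reads $\mathcal{A}(y_{\varepsilon}^{n})=F$. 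Since $\mathcal{H}_{\varepsilon}$ is non-increasing, the Nemytskii map $w\mapsto-\mathcal{H}_{\varepsilon}(w)$ is monotone on $L^{2}\left(\Omega\right)$; combined with the uniform coercivity of $A$ this yields $\left\langle \mathcal{A}(w_{1})-\mathcal{A}(w_{2}),\,w_{1}-w_{2}\right\rangle \geq\underline{\kappa}\,\|w_{1}-w_{2}\|_{H^{1}\left(\Omega\right)}^{2}$, so $\mathcal{A}$ is strongly monotone. The boundedness of $A$ together with the $\tfrac{1}{\varepsilon}$-Lipschitz continuity of $\mathcal{H}_{\varepsilon}$ makes $\mathcal{A}$ Lipschitz, hence hemicontinuous, and the strong monotonicity (with $\|\mathcal{H}_{\varepsilon}\|_{\infty}\leq1$) makes it coercive. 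The Browder--Minty theorem on monotone operators then delivers a unique $y_{\varepsilon}^{n}\in\mathcal{V}$ solving $\mathcal{A}(y_{\varepsilon}^{n})=F$.

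Next I would check that this unique solution automatically satisfies $y_{\varepsilon}^{n}\geq0$, so that it is indeed the solution of the constrained problem. Writing $y:=y_{\varepsilon}^{n}$ and decomposing $y=y^{+}-y^{-}$ with $y^{-}:=\max(-y,\,0)\in\mathcal{V}$, I would test the equation against $-y^{-}$. Using $y^{+}y^{-}=0$ and $\nabla y\cdot\nabla y^{-}=-|\nabla y^{-}|^{2}$, the left-hand side collapses to $\left\langle Ay^{-},\,y^{-}\right\rangle \geq\underline{\kappa}\,\|y^{-}\|_{H^{1}\left(\Omega\right)}^{2}$. On the right, the terms $\left\langle Bu^{n},\,-y^{-}\right\rangle$ and $\left(\overline{y}^{n-1},\,-y^{-}\right)$ are non-positive because $u^{n}\geq0$, $\gamma_{0}y^{-}\geq0$ and $\overline{y}^{n-1}\geq0$. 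The delicate point is the remaining pairing $\left(\mathcal{H}_{\varepsilon}(y)-\overline{\xi}^{n-1},\,-y^{-}\right)$, which I would handle by noting that on $\{y<0\}$ one has $\mathcal{H}_{\varepsilon}(y)=1\geq\overline{\xi}^{n-1}$ (since $\overline{\xi}^{n-1}\leq1$) while $-y^{-}<0$, so the integrand is non-positive there and vanishes elsewhere. Hence $\underline{\kappa}\,\|y^{-}\|_{H^{1}\left(\Omega\right)}^{2}\leq0$, which forces $y^{-}=0$.

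Finally, for the $\varepsilon$-independent bound \eqref{eq-Reg-Sol-Est} I would test the equation against $y_{\varepsilon}^{n}$ itself. Coercivity gives $\underline{\kappa}\,\|y_{\varepsilon}^{n}\|_{H^{1}\left(\Omega\right)}^{2}\leq\left\langle Ay_{\varepsilon}^{n},\,y_{\varepsilon}^{n}\right\rangle$, while each right-hand term is controlled linearly in $\|y_{\varepsilon}^{n}\|_{H^{1}\left(\Omega\right)}$: the crucial observation is that $\|\mathcal{H}_{\varepsilon}(y_{\varepsilon}^{n})\|\leq|\Omega|^{1/2}$ \emph{uniformly} in $\varepsilon$, so $\left(\mathcal{H}_{\varepsilon}(y_{\varepsilon}^{n}),\,y_{\varepsilon}^{n}\right)\leq|\Omega|^{1/2}\|y_{\varepsilon}^{n}\|$; the trace theorem bounds $\left\langle Bu^{n},\,y_{\varepsilon}^{n}\right\rangle$ by $C\,\|u^{n}\|_{\Gamma_{C}}\|y_{\varepsilon}^{n}\|_{H^{1}\left(\Omega\right)}$; and a change of variables, using that $X^{n}$ maps $\Omega$ into itself, bounds $\|\overline{y}^{n-1}\|$ and $\|\overline{\xi}^{n-1}\|$ by $C\,\|y^{n-1}\|$ and $C\,\|\xi^{n-1}\|$. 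Collecting these into a single data-dependent constant $C_{0}$ independent of $\varepsilon$ yields $\underline{\kappa}\,\|y_{\varepsilon}^{n}\|_{H^{1}\left(\Omega\right)}^{2}\leq C_{0}\,\|y_{\varepsilon}^{n}\|_{H^{1}\left(\Omega\right)}$, whence \eqref{eq-Reg-Sol-Est}. I expect the main obstacle to be the sign bookkeeping in the positivity step, where the negative source $-\overline{\xi}^{n-1}$ cannot be discarded on its own and must instead be absorbed by the saturated value $\mathcal{H}_{\varepsilon}=1$ of the regularized Heaviside term precisely on the set $\{y<0\}$; the uniform estimate, by contrast, is routine once the $\varepsilon$-independence of $\|\mathcal{H}_{\varepsilon}\|_{\infty}$ is exploited.
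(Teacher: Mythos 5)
Your proposal is correct, and it takes a genuinely different route for the existence part. The paper linearizes: it defines the map $T$ sending $w\in L^{2}\left(\Omega\right)$ to the solution of $A\widetilde{y}=\mathcal{H}_{\varepsilon}(w)+Bu^{n}+\overline{y}^{n-1}-\overline{\xi}^{n-1}$, uses the uniform $L^{\infty}$ bound on $\mathcal{H}_{\varepsilon}$ to show $T$ maps an $H^{1}$-ball into itself, invokes the compact embedding $H^{1}\left(\Omega\right)\hookrightarrow L^{2}\left(\Omega\right)$ to get complete continuity, and applies Schauder's fixed point theorem. You instead fold the nonlinearity into a single operator $\mathcal{A}=A-\mathcal{H}_{\varepsilon}(\cdot)$ and observe that, because $\mathcal{H}_{\varepsilon}$ is non-increasing, the Nemytskii map $w\mapsto-\mathcal{H}_{\varepsilon}(w)$ is monotone, so $\mathcal{A}$ inherits the strong monotonicity of $A$; Browder--Minty then gives existence. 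The notable payoff of your route is that it delivers \emph{uniqueness} in the same stroke via strict monotonicity, whereas the paper's Schauder argument, as written, establishes only existence even though the lemma asserts uniqueness --- your approach actually closes that gap. It also dispenses with the compactness of the embedding. The trade-off is that the fixed-point route is more robust if the nonlinearity were not monotone; here monotonicity is available and your argument is the tighter one. Your positivity step (testing with the negative part, discarding the non-positive boundary and $\overline{y}^{n-1}$ contributions, and absorbing $-\overline{\xi}^{n-1}$ by the saturated value $\mathcal{H}_{\varepsilon}=1$ on $\left\{ y<0\right\}$ using $0\leq\overline{\xi}^{n-1}\leq1$) and your uniform a priori estimate coincide with the paper's. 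One cosmetic remark: the terms $\overline{y}^{n-1}$ and $\overline{\xi}^{n-1}$ are fixed data independent of $\varepsilon$, so the change-of-variables bookkeeping in your last step, while harmless, is not needed for the $\varepsilon$-uniformity of the constant.
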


\begin{proof}
Consider the mapping $T$ which, for any $y_{\varepsilon}^{n}\in L^{2}\left(\Omega\right),$
associates $\widetilde{y_{\varepsilon}^{n}}=T(y_{\varepsilon}^{n})$
the solution of the following elliptic problem 
\begin{equation}
\begin{array}{rcl}
A\widetilde{y_{\varepsilon}^{n}}=\mathcal{H}_{\varepsilon}(y_{\varepsilon}^{n})+Bu^{n}+\overline{y}^{n-1}-\overline{\xi}^{n-1} & \qquad\text{in }\mathcal{V}^{\prime}.\end{array}\label{LinearizedRegularizedProblem}
\end{equation}
The problem $\eqref{LinearizedRegularizedProblem}$ has a unique solution
by Lax-Milgram theorem. Moreover, there exists a constant $Cst$ not
depending on $\varepsilon$ such that 
\begin{equation}
\|\widetilde{y_{\varepsilon}^{n}}\|_{H^{1}\left(\Omega\right)}\leq Cst.\label{Estimate1-1}
\end{equation}
Here, we have used the fact that $\left(\mathcal{H}_{\varepsilon}(y_{\varepsilon}^{n})\right){}_{\varepsilon}$
is bounded in $L^{\infty}\left(\Omega\right)$ independently of $\varepsilon.$

The mapping $T$ is then bounded from $L^{2}\left(\Omega\right)$
to $H^{1}\left(\Omega\right).$ From the compact embedding of $H^{1}\left(\Omega\right)$
into $L^{2}\left(\Omega\right)$, it follows that $T$ is completely
continuous from $\mathcal{V}$ to $L^{2}\left(\Omega\right).$ Moreover
the estimate $\eqref{Estimate1-1}$ shows that $T(B_{Cst})\subset B_{Cst}$
with $B_{Cst}$ being the $H^{1}\left(\Omega\right)$-ball of radius
$Cst.$ Schauder's fixed point theorem yields the existence of a function
$y_{\varepsilon}^{n}$ such that $T(y_{\varepsilon}^{n})=y_{\varepsilon}^{n}$
satisfying \eqref{eq-Reg-Sol-Est} with $C=Cst.$

Next, we claim that $y_{\varepsilon}^{n}\geq0$ a.e. in $\Omega.$
Let $\left(y_{\varepsilon}^{n}\right)^{-}=\min(0,\,y_{\varepsilon}^{n}).$
It is clear that $\left(y_{\varepsilon}^{n}\right)^{-}\in\mathcal{V}.$
By choosing $\phi=\left(y_{\varepsilon}^{n}\right)^{-}$ in \ref{LinearizedRegularizedProblem}
we arrive at 
\[
\left\langle Ay_{\varepsilon}^{n},\,\left(y_{\varepsilon}^{n}\right)^{-}\right\rangle =\left\langle Bu^{n},\,\left(y_{\varepsilon}^{n}\right)^{-}\right\rangle +\left(\overline{y}^{n-1}+\mathcal{H}(y_{\varepsilon}^{n})-\overline{\xi}^{n-1},\,\left(y_{\varepsilon}^{n}\right)^{-}\right).
\]
Since $u^{n}\geq0$ a.e. in $\Gamma_{C},$ $y^{n-1}\geq0$ a.e. in
$\Omega$ and $0\leq\xi^{n-1}\leq1$ a.e. in $\Omega$ and using the
fact that $\mathcal{H}_{\varepsilon}(x)=1$ for $x\leq0$ we obtain
\[
\left\langle A\left(y_{\varepsilon}^{n}\right)^{-},\,\left(y_{\varepsilon}^{n}\right)^{-}\right\rangle =\left\langle Bu^{n},\,\left(y_{\varepsilon}^{n}\right)^{-}\right\rangle +\left(\overline{y}^{n-1}+1-\overline{\xi}^{n-1},\,\left(y_{\varepsilon}^{n}\right)^{-}\right)\leq0.
\]
The coercivity of $A$ leads to $\left(y_{\varepsilon}^{n}\right)^{-}=0$
a.e. in $\Omega$ and then $y_{\varepsilon}^{n}\geq0$ a.e. in $\Omega.$
Consequently, the solution $y_{\varepsilon}^{n}$ is a solution of
$\left(\mathcal{WF}_{\varepsilon}^{n}\right).$ 
\end{proof}
Now for any $\varepsilon>0,$ let $y_{\varepsilon}^{n}$ be the solution
of the regularized problem $\left(\mathcal{WF}_{\varepsilon}^{n}\right).$
From \eqref{eq-Reg-Sol-Est} we can find a subsequence, also denoted
$(y_{\varepsilon}^{n})_{\varepsilon>0},$ such that 
\[
\begin{array}{rl}
y_{\varepsilon}^{n}\rightharpoonup y^{n} & \textrm{ in }H^{1}\left(\Omega\right),\\
y_{\varepsilon}^{n}\longrightarrow y^{n} & \textrm{ in }L^{2}\left(\Omega\right),\\
\mathcal{H}_{\varepsilon}(y_{\varepsilon}^{n})\overset{*}{\rightharpoonup}\xi^{n} & \textrm{ in }L^{\infty}\left(\Omega\right).
\end{array}
\]
By passing to the limit, we deduce that 
\begin{equation}
\left\{ \begin{array}{rcl}
 &  & y^{n}\geq0\qquad\textrm{ a.e. in }\Omega,\\
 &  & 0\leq\xi^{n}\leq1\qquad\textrm{ a.e. in }\Omega,\\
 &  & Ay^{n}=\xi^{n}+Bu^{n}+\overline{y}^{n-1}-\overline{\xi}^{n-1}\qquad\text{ in }\mathcal{V}^{\prime}.
\end{array}\right.
\end{equation}

Further, observe that 
\begin{equation}
y^{n}\geq0,\quad\xi^{n}\in\mathcal{H}(y^{n})\Longleftrightarrow y^{n}\geq0\qquad0\leq\xi^{n}\leq1\qquad\left(y^{n},\,\xi^{n}\right)=0.\label{HeavisideComplementarity}
\end{equation}
Therefore to complete the proof of existence of a solution for the
initial problem, it remains to prove that $\left(y^{n},\,\xi^{n}\right)=0.$
One has 
\begin{equation}
\left(y_{\varepsilon}^{n},\:\mathcal{H}_{\varepsilon}(y_{\varepsilon}^{n})\right)\longrightarrow\left(y^{n},\:\xi^{n}\right)\label{p(1-theta)-1}
\end{equation}
from the $L^{2}\left(\Omega\right)$ strong convergence of $y_{\varepsilon}^{n}$
to $y^{n}$ and the $L^{\infty}\left(\Omega\right)$ weak-{*} convergence
of $\mathcal{H}_{\varepsilon}(y_{\varepsilon}^{n})$ to $\xi^{n}.$

On the other hand, from $\mathcal{H}_{\varepsilon}$ expression we
have 
\[
\left(y_{\varepsilon}^{n},\:\mathcal{H}_{\varepsilon}(y_{\varepsilon}^{n})\right)\leq\varepsilon\text{meas}\left(\Omega\right)\longrightarrow0.
\]
Consequently $\left(y^{n},\:\xi^{n}\right)=0.$

Now, notice that if $\left(y^{n},\,\xi^{n}\right)\in\mathcal{V}\times L^{2}\left(\Omega\right)$
is a solution to the complementarity problem 
\[
\left(\mathcal{CS}^{n}\right)\left\{ \begin{array}{l}
\xi^{n}+Ay^{n}=Bu^{n}+\overline{y}^{n-1}-\overline{\xi}^{n-1}\qquad\text{ in }\mathcal{V}^{\prime},\\
y^{n}\geq0,\qquad\xi^{n}\geq0,\qquad\left(y^{n},\,\xi^{n}\right)=0\qquad\textrm{ a.e. in }\Omega,
\end{array}\right.
\]
then $y^{n}$ is a solution to the variational inequality 
\begin{equation}
\left(\mathcal{VI}^{n}\right)\left\{ \begin{array}{l}
y^{n}\in\mathcal{K}:=\left\{ q\in\mathcal{V}:\quad q\geq0\text{ a.e. in }\Omega\right\} ,\\
\left\langle Ay^{n},\,q-y^{n}\right\rangle \geq\left\langle Bu^{n},\,q-y^{n}\right\rangle +\left(\overline{y}^{n-1}-\overline{\xi}^{n-1},\,q-y^{n}\right)\qquad\forall q\in\mathcal{K}.
\end{array}\right.\label{eq-VI}
\end{equation}

Since $\left(\mathcal{VI}^{n}\right)$ possesses a unique solution
in $\mathcal{V}$ by virtue of (Stampacchia - Rodriguez), we deduce
that $y^{n}$ is unique. \\
 Finally, the uniqueness of $\xi^{n}$ follows from the uniqueness
of $y^{n}.$ More precisely, if $\left(y^{n},\,\xi_{1}^{n}\right)\in\mathcal{V}\times L^{2}\left(\Omega\right)$
and $\left(y^{n},\,\xi_{2}^{n}\right)\in\mathcal{V}\times L^{2}\left(\Omega\right)$
are two solutions to $\left(\mathcal{CS}^{n}\right)$ then 
\[
\xi_{1}^{n}=\xi_{2}^{n}=Ay^{n}-Bu^{n}-\overline{y}^{n-1}+\overline{\xi}^{n-1}\qquad\text{in }\mathcal{V}^{\prime}.
\]
Therefore $\xi_{1}^{n}-\xi_{2}^{n}=0$ in $\mathcal{V}^{\prime}.$
By the density of $\mathcal{V}\supset H_{0}^{1}\left(\Omega\right)$
in $L^{2}\left(\Omega\right)$ we conclude that $\xi_{1}^{n}=\xi_{2}^{n}$
in $L^{2}\left(\Omega\right).$

\section*{Appendix B: mathematical optimization in Banach spaces}

Let $\mathcal{X}$ and $\mathcal{Y}$ be real Banach spaces. For 
\begin{eqnarray*}
F:\,\mathcal{X} & \longrightarrow & \mathbb{R}\quad\text{Frechet-differentiable functional,}\\
g:\,\mathcal{X} & \longrightarrow & \mathcal{Y}\quad\text{continuously Frechet-differentiable,}
\end{eqnarray*}
we consider the following mathematical program: 
\begin{equation}
\min\left\{ F\left(x\right)\;|\;g\left(x\right)\in M,\;x\in C\right\} ,\label{eq:BanachMathProg}
\end{equation}
where $C$ is a closed convex subset of $\mathcal{X}$ and $M$ a
closed cone in $\mathcal{Y}$ with vertex at $0.$\\
 We suppose that the problem $\left(\ref{eq:BanachMathProg}\right)$
has an optimal solution $\hat{x,}$ and we introduce the conical hulls
of $C-\left\{ \hat{x}\right\} $ and $M-\left\{ y\right\} ,$ respectively,
by 
\begin{eqnarray*}
C\left(\hat{x}\right) & = & \left\{ x\in\mathcal{X}\;|\;\exists\beta\geq0,\,\exists c\in C,\;x=\beta\left(c-\hat{x}\right)\right\} ,\\
M\left(y\right) & = & \left\{ z\in\mathcal{Y}\;|\;\exists\lambda\geq0,\,\exists\zeta\in M,\;z=\zeta-\lambda y\right\} .
\end{eqnarray*}
The main result concerning the existence of a Lagrange multiplier
for $\left(\ref{eq:BanachMathProg}\right)$ is given in the next Theorem. 
\begin{thm*}
\label{ThmZowe}\cite{ZoweKurcyusz-1979} Let $\hat{x}$ be an optimal
solution of the problem $\left(\ref{eq:BanachMathProg}\right)$ satisfying
the following constraints qualification 
\begin{equation}
g^{\prime}\left(\hat{x}\right)\cdot C\left(\hat{x}\right)-M\left(g\left(\hat{x}\right)\right)=\mathcal{Y}.\label{eq:ZoweConstQualif}
\end{equation}
Then there exists a Lagrange multiplier $\mu^{*}\in\mathcal{Y}^{*}$
such that 
\begin{eqnarray}
\left\langle \mu^{*},\,z\right\rangle _{\mathcal{Y}^{*},\mathcal{Y}} & \geq & 0\quad\forall z\in M,\label{eq:ZoeLagMult1}\\
\left\langle \mu^{*},\,g\left(\hat{x}\right)\right\rangle _{\mathcal{Y}^{*},\mathcal{Y}} & = & 0,\label{eq:ZoeLagMult2}\\
F^{\prime}\left(\hat{x}\right)-\mu^{*}\circ g^{\prime}\left(\hat{x}\right) & \in & C\left(\hat{x}\right)_{+},\label{eq:ZoeLagMult3}
\end{eqnarray}
where $A_{+}=\left\{ x^{*}\in\mathcal{X}^{*}:\;\left\langle x^{*},\,a\right\rangle _{\mathcal{X}^{*},\mathcal{X}}\geq0\;\forall a\in A\right\} ,$
$\mathcal{Y}^{*}$ and $\mathcal{X}^{*}$ are the topological dual
spaces of $\mathcal{Y}$ and $\mathcal{X},$ respectively, and $(\mu^{*}\circ g^{\prime}\left(\hat{x}\right))\,d=\left\langle \mu^{*},\,g^{\prime}\left(\hat{x}\right)\,d\right\rangle _{\mathcal{Y}^{*},\mathcal{Y}}$
$\forall d\in\mathcal{X}.$ 
\end{thm*}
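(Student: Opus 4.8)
The plan is to run the classical separation argument for abstract Lagrange multipliers: convert the constraint qualification \eqref{eq:ZoweConstQualif} into a regularity (open mapping) statement, use it to identify the cone of linearized feasible directions at $\hat{x}$ with the genuine tangent directions, record the primal first-order condition that $F^{\prime}(\hat{x})$ is nonnegative on that cone, and finally dualize it by a generalized Farkas lemma to extract $\mu^{*}$.

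First I would introduce the linearized feasible cone
\[
L:=\left\{ d\in C\left(\hat{x}\right)\;:\;g^{\prime}\left(\hat{x}\right)d\in M\left(g\left(\hat{x}\right)\right)\right\} .
\]
The hypothesis \eqref{eq:ZoweConstQualif} is precisely a Robinson-type surjectivity condition for the convex process $d\mapsto g^{\prime}(\hat{x})d-M(g(\hat{x}))$. By the Robinson--Ursescu open mapping theorem (equivalently, the Baire-category argument of \cite{ZoweKurcyusz-1979}), this surjectivity yields metric regularity at $\hat{x}$ of the constraint system $x\in C$, $g(x)\in M$. Exploiting the continuous Fr\'echet differentiability of $g$ together with a Lyusternik--Graves correction, metric regularity lets me realise each $d\in L$ as a true tangent direction, i.e. produce a feasible arc with $x(t)\in C$, $g(x(t))\in M$ and $x(t)=\hat{x}+td+o(t)$.

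With tangency in hand, optimality of $\hat{x}$ forces $F^{\prime}(\hat{x})d\geq 0$ for every $d\in L$, that is $F^{\prime}(\hat{x})\in L_{+}$. The core of the argument is then to compute $L_{+}$. Writing $L=C(\hat{x})\cap(g^{\prime}(\hat{x}))^{-1}(M(g(\hat{x})))$, I would show, using \eqref{eq:ZoweConstQualif} once more to guarantee that the relevant sum of dual cones is closed, that
\[
L_{+}=C\left(\hat{x}\right)_{+}+\left\{ \mu^{*}\circ g^{\prime}\left(\hat{x}\right)\;:\;\mu^{*}\in\left(M\left(g\left(\hat{x}\right)\right)\right)_{+}\right\} .
\]
This is the infinite-dimensional Farkas, or Dubovitskii--Milyutin, step: a functional nonnegative on the intersection of the two constraint cones decomposes along them, and surjectivity is exactly what rules out a singular multiplier and keeps $\mu^{*}$ inside $\mathcal{Y}^{*}$. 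The membership $F^{\prime}(\hat{x})\in L_{+}$ then provides $\mu^{*}\in(M(g(\hat{x})))_{+}$ together with $F^{\prime}(\hat{x})-\mu^{*}\circ g^{\prime}(\hat{x})\in C(\hat{x})_{+}$, which is \eqref{eq:ZoeLagMult3}.

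It remains to unpack $\mu^{*}\in(M(g(\hat{x})))_{+}$. Since $M(g(\hat{x}))=M-\mathbb{R}_{+}g(\hat{x})$ and $0\in M$, nonnegativity of $\mu^{*}$ on this cone separates into $\langle\mu^{*},z\rangle_{\mathcal{Y}^{*},\mathcal{Y}}\geq 0$ for all $z\in M$, which is \eqref{eq:ZoeLagMult1}, and $\langle\mu^{*},-g(\hat{x})\rangle_{\mathcal{Y}^{*},\mathcal{Y}}\geq 0$; combined with $g(\hat{x})\in M$, which yields the reverse inequality, this forces the complementarity $\langle\mu^{*},g(\hat{x})\rangle_{\mathcal{Y}^{*},\mathcal{Y}}=0$, namely \eqref{eq:ZoeLagMult2}. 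I expect the main obstacle to be the tangency step: passing from the linearized cone $L$ to genuine feasible arcs is where the constraint qualification must be spent in full, through metric regularity, and the same regularity is what secures the closedness making the Farkas decomposition of $L_{+}$ exact rather than merely valid up to closure.
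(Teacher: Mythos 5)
This theorem is not proved in the paper at all: it is quoted verbatim from \cite{ZoweKurcyusz-1979} as an imported tool (Appendix B exists only to state it), so there is no in-paper proof to compare your attempt against. Judged on its own, your outline reproduces the standard Zowe--Kurcyusz/Robinson argument and is structurally sound: the constraint qualification \eqref{eq:ZoweConstQualif} is indeed a surjectivity condition for the convex process $d\mapsto g^{\prime}(\hat{x})d-M(g(\hat{x}))$ restricted to $C(\hat{x})$; an open-mapping/metric-regularity step turns linearized directions into genuine feasible variations; optimality gives $F^{\prime}(\hat{x})\in L_{+}$; and the dual-cone decomposition of $L_{+}$, whose exactness (weak-$*$ closedness of $C(\hat{x})_{+}+g^{\prime}(\hat{x})^{*}\bigl(M(g(\hat{x}))\bigr)_{+}$) is again paid for by the constraint qualification, yields $\mu^{*}$. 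Your unpacking of $\mu^{*}\in\bigl(M(g(\hat{x}))\bigr)_{+}$ into \eqref{eq:ZoeLagMult1} and \eqref{eq:ZoeLagMult2} via $M\subseteq M(g(\hat{x}))$ and $-g(\hat{x})\in M(g(\hat{x}))$ is correct. Be aware, though, that what you have is a plan rather than a proof: the three load-bearing steps --- the Robinson--Ursescu-type openness for the set-valued map, the Lyusternik-style correction that keeps the perturbed points simultaneously in $C$ and in $g^{-1}(M)$, and the closedness lemma behind the Farkas decomposition --- are each nontrivial and are precisely the content of the cited reference; none of them is discharged in your sketch.
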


\end{document}